\title{$q$-analogues of sums of consecutive powers of natural numbers and extended Carlitz $q$-Bernoulli numbers and polynomials}
\author{\sc Bakir FARHI \\
National Higher School of Mathematics \\
P.O.Box 75, Mahelma 16093, Sidi Abdellah (Algiers) \\
Algeria \\[1mm]
\href{mailto:bakir.farhi@nhsm.edu.dz}{\tt bakir.farhi@nhsm.edu.dz} \\[1mm]
\url{http://farhi.bakir.free.fr/}
}
\date{}
\let\up=\textsuperscript
\let\epsilon=\varepsilon
\def\R{{\mathbb R}}
\def\N{{\mathbb N}}
\def\Z{{\mathbb Z}}
\def\E{\mathscr{E}}
\def\deg{\mathrm{deg}}
\def\card{\mathrm{Card}}
\def\idem{\leavevmode\hbox to 10.6mm{\vrule height .63ex depth -.59ex
    width 10mm\hfill}}
\newcommand{\qbinom}{\genfrac{[}{]}{0pt}{}} 
\theoremstyle{plain}
\numberwithin{equation}{section}
\newtheorem{thm}{Theorem}[section]
\newtheorem{lemma}[thm]{Lemma}
\newtheorem{prop}[thm]{Proposition}
\newtheorem{coll}[thm]{Corollary}
\theoremstyle{definition}
\newtheorem{defi}[thm]{Definition}
\theoremstyle{remark}
\newtheorem{rmk}[thm]{Remark}
\newtheorem{expl}[thm]{Example}
\newtheorem{expls}[thm]{Examples}
\begin{document}
\maketitle

\begin{abstract}
In this paper, we investigate a specific class of $q$-polynomial sequences that serve as a $q$-analogue of the classical Appell sequences. This framework offers an elegant approach to revisiting classical results by Carlitz and, more interestingly, to establishing an important extension of the Carlitz $q$-Bernoulli polynomials and numbers. In addition, we establish explicit series representations for our extended Carlitz $q$-Bernoulli numbers and express them in terms of $q$-Stirling numbers of the second kind. This leads to a novel formula that explicitly connects the Carlitz $q$-Bernoulli numbers with the $q$-Stirling numbers of the second kind.  
\end{abstract}

\noindent\textbf{MSC 2020:}  Primary 05A30; Secondary 11B68, 11B73, 05A40. \\
\textbf{Keywords:} Carlitz $q$-Bernoulli numbers and polynomials, $q$-analogue of Appell sequences, $q$-analogues of sums of consecutive powers of natural numbers, $q$-Stirling numbers of the second kind, umbral calculus.

\section{Introduction and Notation}\label{sec1}

Throughout this paper, we let $\N$ and $\N_0$ respectively denote the set of positive integers and the set of nonnegative integers. For $n \in \N_0$, we let ${(X)}_n$ denote the \textit{falling factorial} of $X$ to depth $n$, defined as
$$
{(X)}_n := X (X - 1) \cdots (X - n + 1) .
$$
The Stirling numbers of the second kind, denoted $S(n , k)$ ($n , k \in \N_0$, $n \geq k$), are then defined through the polynomial identity:
$$
X^n = \sum_{k = 0}^{n} S(n , k) {(X)}_k ~~~~~~~~~~ (\forall n \in \N_0) .
$$
The Bernoulli polynomials are denoted, as usual, by $B_n(X)$ and the Bernoulli numbers by $B_n$ ($n \in \N_0$). A modern definition of the Bernoulli polynomials and numbers employs their exponential generating functions, given by
$$
\frac{t}{e^t - 1} e^{X t} = \sum_{n = 0}^{\infty} B_n(X) \frac{t^n}{n!} ~~,~~  
\frac{t}{e^t - 1} = \sum_{n = 0}^{\infty} B_n \frac{t^n}{n!}  
$$
(so $B_n = B_n(0)$ for all $n \in \N_0$). As is well-known, the Bernoulli polynomials and numbers play a central role in various branches of mathematics, including number theory, mathematical analysis, and algebraic geometry. For a modern and exhaustive perspective, the reader is referred to the paper by Kouba \cite{kou}. An interesting and well-known formula expressing the Bernoulli numbers in terms of the Stirling numbers of the second kind is given by:
\begin{equation}\label{eqn3}
B_n = \sum_{k = 0}^{n} (-1)^k \frac{k!}{k + 1} S(n , k) ~~~~~~~~~~ (\forall n \in \N_0)
\end{equation}
(see e.g., \cite[Corollary 2.8]{far}).

It should be noted that the Bernoulli polynomial sequence ${\left(B_n(X)\right)}_{n \in \N_0}$ belongs to the broader class of Appell sequences (see \cite{cos,rom}). These are sequences of polynomials ${\left(P_n(X)\right)}_{n \in \N_0}$ characterized by the following properties: $P_0(X)$ is a nonzero constant polynomial, and for all $n \in \N$, the derivatives satisfy the relation $P_n'(X) = n P_{n - 1}(X)$. Using the Taylor expansion, an Appell polynomial sequence ${\left(P_n(X)\right)}_n$ can also be characterized by its general term, which takes the form
$$
P_n(X) = \sum_{k = 0}^{n} \binom{n}{k} p_k X^{n - k} ~~~~~~~~~~ (\forall n \in \N_0) ,
$$
where ${(p_k)}_{k \in \N_0}$ is a sequence of scalars with $p_0 \neq 0$.

Further, let $q$ be a positive real parameter. The $q$-analogue of $X$, whether treated as an indeterminate or a number, is defined by:
$$
[X] := \frac{q^X - 1}{q - 1} .
$$
For $k \in \N_0$, the $q$-analogue ${[X]}_k$ of ${(X)}_k$ is defined as
$$
[X]_k := [X] \cdot [X - 1] \cdots [X - k + 1] .
$$
For $n \in \N_0$, the $q$-analogue of $n!$ is given by
$$
[n]! := {[n]}_n = [n] \cdot [n - 1] \cdots [1] .
$$
Furthermore, for $n , k \in \N_0$ with $n \geq k$, the $q$-analogue $\qbinom{n}{k}$ of the binomial coefficient $\binom{n}{k}$ is defined as
$$
\qbinom{n}{k} := \frac{[n]!}{[k]! [n - k]!} = \frac{{[n]}_k}{[k]!} .
$$
These numbers appear, in particular, in the famous Gauss binomial formula:
\begin{equation}\label{eqn1}
\prod_{k = 0}^{n - 1} \left(x + q^k y\right) = \sum_{k = 0}^{n} q^{\frac{k (k - 1)}{2}} \qbinom{n}{k} x^{n - k} y^k ~~~~~~~~~~ (\forall n \in \N, \forall x , y \in \R) ,
\end{equation}
and, just like the classical binomial coefficients, they also have a combinatorial interpretation (see, e.g., \cite{kac}).

On the other hand, we let $\Delta$ denote the forward difference operator which acts linearly on $\R[X]$ by the formula:
$$
(\Delta P)(X) := P(X + 1) - P(X) ~~~~~~~~~~ (\forall P \in \R[X]) .
$$
It is well known and easy to verify that the $n$-fold composition of $\Delta$ is given by the formula:
\begin{equation}\label{eqn2}
(\Delta^n P)(X) = \sum_{k = 0}^{n} (-1)^{n - k} \binom{n}{k} P(X + k) ~~~~~~~~~~ (\forall P \in \R[X] , \forall n \in \N_0) .
\end{equation}

In this paper, we call a \textit{$q$-polynomial} any polynomial in $[X]$ with real coefficients. Equivalently, a $q$-polynomial is a polynomial in $q^X$ with real coefficients. The degree of a $q$-polynomial refers to its degree as a polynomial in $[X]$ (or equivalently in $q^X$). We denote by $\E$ the $\R$-vector space of all $q$-polynomials and by $\E_d$ ($d \in \N_0$) the $\R$-vector subspace of $\E$ consisting of $q$-polynomials of degree $\leq d$. An important example of $q$-polynomials arises naturally in the evaluation of sums of the form
$$
S_{n , r}(N) := \sum_{k = 0}^{N - 1} q^{r k} {[k]}^n
$$
($N , n \in \N_0$, $r \in \N$), which are $q$-analogues of the sum of powers $\sum_{k = 0}^{N - 1} k^n$. More interestingly, when expressed as a linear combination of the $q$-polynomials $q^{k N} {[N]}^{n - k}$ ($0 \leq k \leq n$), $S_{n , r}(N)$ leads to a $q$-analogue example of Appell polynomial sequences (see Theorem \ref{t2}). Perhaps it was by exploring this fact that Carlitz \cite{car} obtained important $q$-analogues of Bernoulli numbers and polynomials. By means of the symbolic calculus, Carlitz \cite{car} defined two real sequences ${(\eta_n)}_{n \in \N_0}$ and ${(\beta_n)}_{n \in \N_0}$ (depending on $q$) by:
\begin{align}
~ & \left\{\begin{array}{l}
\eta_0 = 1 , \eta_1 = 0 , \\
(q \eta + 1)^n = \eta^n ~~ (\forall n \geq 2)
\end{array}
\right. , \label{eq4} \\
~ & \left\{\begin{array}{l}
\beta_0 = 1 , \\
q (q \beta + 1)^n - \beta^n = \delta_{n , 1} ~~ (\forall n \in \N)
\end{array}
\right. , \label{eq5}
\end{align}
where $\delta_{i , j}$ denotes the Kronecker delta. Furthermore, it is noted that for all $n \in \N_0$:
\begin{equation}\label{eq6}
\beta_n = \eta_n + (q - 1) \eta_{n + 1} .
\end{equation}
Although both sequences are intriguing, the sequence ${(\beta_n)}_n$ is particularly noteworthy because its terms are well-defined at $q = 1$ (unlike ${(\eta_n)}_n$, for which only the first two terms are defined at $q = 1$). Moreover, specializing $q = 1$ in $\beta_n$ recovers the $n$\up{th} Bernoulli number $B_n$. These facts are far from trivial. To prove them, Carlitz was led to introduce $q$-analogues of the Stirling numbers of the second kind, $S_q(n , k) (n , k \in \N_0 , n \geq k)$, which he defined via the following identity of $q$-polynomials:
\begin{equation}\label{eq7}
{[X]}^n = \sum_{k = 0}^{n} q^{\frac{1}{2} k (k - 1)} S_q(n , k) {[X]}_k
\end{equation}
(for all $n \in \N_0$). Specializing $q = 1$ in $S_q(n , k)$ ($n , k \in \N_0$, $n \geq k$) recovers the classical Stirling numbers of the second kind $S(n , k)$. Using the $S_q(n , k)$'s, Carlitz \cite{car} established the following formula for $\beta_n$ ($n \in \N_0$):
\begin{equation}\label{eq8}
\beta_n = \sum_{k = 0}^{n} (-1)^k \frac{[k]!}{[k + 1]} S_q(n , k) ,
\end{equation}
which confirms that $\beta_n$ is always well-defined at $q = 1$, with value equal to the Bernoulli number $B_n$ (according to \eqref{eqn3}).

Carlitz also associated to the sequences ${(\eta_n)}_n$ and ${(\beta_n)}_n$ the $q$-polynomial sequences ${(\eta_n(X))}_n$ and ${(\beta_n(X))}_n$, given by:
\begin{align}
\eta_n(X) & = \sum_{k = 0}^{n} \binom{n}{k} \eta_k q^{k X} {[X]}^{n - k} , \label{eq9} \\
\beta_n(X) & = \sum_{k = 0}^{n} \binom{n}{k} \beta_k q^{k X} {[X]}^{n - k}
\end{align}
(for all $n \in \N_0$). The original sequences ${(\eta_n)}_n$ and ${(\beta_n)}_n$ are then respectively obtained as the values of ${(\eta_n(X))}_n$ and ${(\beta_n(X))}_n$ at $X = 0$. Especially, ${(\beta_n(X))}_n$ constitutes a $q$-analogue of the classical Bernoulli polynomial sequence ${(B_n(X))}_n$. That said, alternative definitions of the Carlitz $q$-Bernoulli numbers and polynomials exist, such as those based on generating functions or $p$-adic $q$-integrals on $\Z_p$. For example, Koblitz \cite{kob} constructed $q$-analogues of the $p$-adic Dirichlet $L$-series that interpolate the Carlitz $q$-Bernoulli numbers.

In this work, we first incorporate the Carlitz \(q\)-polynomial sequences \({(\eta_n(X))}_n\) and \({(\beta_n(X))}_n\) into a broader class of \(q\)-polynomial sequences, which we call ``Carlitz-type \(q\)-polynomial sequences''. We establish a fundamental theorem showing that this class of \(q\)-polynomial sequences serves as a \(q\)-analogue of the classical Appell polynomial sequences. Next, We verify that the closed forms of the $q$-analogues of power sums of consecutive natural numbers can essentially be expressed in terms of Carlitz-type $q$-polynomial sequences. Furthermore, we show that a specific subclass of Carlitz-type $q$-polynomial sequences (including \({(\eta_n(X))}_n\) and \({(\beta_n(X))}_n\)) can be generated using simple symbolic formulas, such as those established by Carlitz in \cite{car}. We then establish several properties of Carlitz-type \(q\)-polynomial sequences and leverage these properties to extend \({(\eta_n(X))}_n\) and \({(\beta_n(X))}_n\) into \({(\beta_n^{(r)}(X))}_n\) (\(r \in \mathbb{N}_0\)), which provide important \(q\)-analogues of the classical Bernoulli polynomials for $r \geq 1$. Additionally, we represent the numbers \(\beta_n^{(r)} := \beta_n^{(r)}(0)\) as series and express them in terms of the \(q\)-Stirling numbers of the second kind. Finally, we derive a novel formula connecting the Carlitz \(q\)-Bernoulli numbers with the \(q\)-Stirling numbers of the second kind.

\section{The results and the proofs}\label{sec2}

\subsection{Carlitz-type $q$-polynomial sequences}\label{subsec1}

We first introduce the following definition:

\begin{defi}
Let ${(T_n(X))}_{n \in \N_0}$ be a $q$-polynomial sequence. We say that ${(T_n(X))}_n$ is \textit{Carlitz-type} if its general term $T_n(X)$ ($n \in \N_0$) has the form:
$$
T_n(X) = (q - 1)^{- n} \sum_{k = 0}^{n} (-1)^{n - k} \binom{n}{k} a_k q^{k X} ,
$$
where ${(a_k)}_{k \in \N_0}$ is a real sequence. In this case, ${(a_k)}_k$ is called \textit{the associated sequence} of the Carlitz-type $q$-polynomial sequence ${(T_n(X))}_n$.
\end{defi}

An important characterization of the Carlitz-type $q$-polynomial sequences is given by the following fundamental theorem:

\begin{thm}\label{t1}
Let ${(T_n(X))}_{n \in \N_0}$ be a $q$-polynomial sequence, and for all $n \in \N_0$, set $t_n := T_n(0)$. Then ${(T_n(X))}_n$ is Carlitz-type if and only if we have for all $n \in \N_0$:
$$
T_n(X) = \sum_{k = 0}^{n} \binom{n}{k} t_k q^{k X} {[X]}^{n - k} ,
$$
that is (symbolically):
$$
T_n(X) = \left(q^X t + [X]\right)^n .
$$
\end{thm}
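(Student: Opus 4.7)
The plan is to show that the two explicit forms for $T_n(X)$ are related by a simple change of basis on the finite-dimensional space of $q$-polynomials of degree $\leq n$, so the equivalence follows from an algebraic identity together with the binomial inversion formula.

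First, I would assume $(T_n(X))_n$ is Carlitz-type with associated sequence $(a_k)_k$, and set $X = 0$ to get
$$
t_n = (q - 1)^{-n} \sum_{k = 0}^{n} (-1)^{n - k} \binom{n}{k} a_k .
$$
Writing $b_n := (q - 1)^n t_n$, this is exactly the binomial transform relation $b_n = \sum_{k = 0}^n (-1)^{n - k}\binom{n}{k} a_k$, which inverts to $a_n = \sum_{k = 0}^n \binom{n}{k} b_k$, i.e.
$$
a_n = \sum_{k = 0}^n \binom{n}{k} (q - 1)^k t_k .
$$

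Next, starting from the candidate identity $T_n(X) = \sum_{k = 0}^n \binom{n}{k} t_k q^{k X} {[X]}^{n - k}$, I would expand ${[X]}^{n - k} = (q - 1)^{-(n - k)} (q^X - 1)^{n - k}$ by the ordinary binomial theorem and absorb the factor $q^{k X}$ into the resulting exponentials. After the substitution $m = j + k$ on the inner exponent and the standard identity $\binom{n}{k}\binom{n - k}{m - k} = \binom{n}{m}\binom{m}{k}$, the double sum collapses to
$$
(q - 1)^{-n} \sum_{m = 0}^{n} (-1)^{n - m} \binom{n}{m} q^{m X} \sum_{k = 0}^{m} \binom{m}{k} (q - 1)^k t_k ,
$$
and the inner sum is exactly $a_m$ by the inversion established above. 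This matches the Carlitz-type form and proves one direction; the converse is obtained by reading the same calculation in reverse, using that the linear map $(a_k)_k \mapsto (t_k)_k$ furnished by evaluation at $X = 0$ is a bijection on real sequences.

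There is no genuine obstacle here beyond careful combinatorial bookkeeping of the double sum and the binomial inversion step; the symbolic statement $T_n(X) = \left(q^X t + [X]\right)^n$ is simply the binomial expansion of the right-hand side with $t^k$ reinterpreted as $t_k$, so it requires no separate argument.
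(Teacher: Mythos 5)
Your proof is correct and follows essentially the same route as the paper's: the same binomial inversion yielding $a_n = \sum_{k}\binom{n}{k}(q-1)^k t_k$, the same identity $\binom{n}{k}\binom{n-k}{m-k}=\binom{n}{m}\binom{m}{k}$, and the same use of $(q^X-1)^{n-k}=(q-1)^{n-k}{[X]}^{n-k}$, merely with the double-sum computation run from the Appell-type form toward the Carlitz-type form rather than the reverse. Your treatment of the converse is slightly more economical than the paper's (which introduces an auxiliary Carlitz-type sequence ${(S_n(X))}_n$ and checks $s_n=t_n$), since your single expansion already exhibits the required Carlitz-type form directly.
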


To prove this theorem, we need to use the inversion formula from the following lemma:

\begin{lemma}[see e.g, \cite{com,rio}]\label{l1}
Let ${(u_n)}_{n \in \N_0}$ and ${(v_n)}_{n \in \N_0}$ be two real sequences. Then the two following identities are equivalent:
\begin{alignat}{3}
u_n & = \sum_{k = 0}^{n} \binom{n}{k} v_k ~~~~~~~~~~& (\forall n \in \N_0) , \notag \\[1mm]
v_n & = \sum_{k = 0}^{n} (-1)^{n - k} \binom{n}{k} u_k ~~~~~~~~~~& (\forall n \in \N_0) . \tag*{\qedsymbol}
\end{alignat}
\end{lemma}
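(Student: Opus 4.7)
The plan is to prove the two directions of the equivalence symmetrically by direct substitution, with both computations collapsing via a single orthogonality relation between the two binomial matrices. Concretely, I will first establish the identity
$$
\sum_{k = j}^{n} (-1)^{n - k} \binom{n}{k} \binom{k}{j} \;=\; \delta_{n , j} \qquad (0 \leq j \leq n) ,
$$
which expresses the fact that the lower-triangular infinite matrices $\bigl(\binom{n}{k}\bigr)_{n, k \geq 0}$ and $\bigl((-1)^{n - k}\binom{n}{k}\bigr)_{n, k \geq 0}$ are mutually inverse. Once this is available, each direction of the equivalence is a one-line consequence.

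To establish this orthogonality relation, I would apply the classical subset-of-subset identity $\binom{n}{k}\binom{k}{j} = \binom{n}{j}\binom{n - j}{k - j}$ to factor $\binom{n}{j}$ out of the sum, then reindex with $i = k - j$ to obtain
$$
\binom{n}{j} \sum_{i = 0}^{n - j} (-1)^{n - j - i} \binom{n - j}{i} \;=\; \binom{n}{j} (1 - 1)^{n - j} ,
$$
which by the binomial theorem vanishes for $n > j$ and equals $1$ for $n = j$, yielding $\delta_{n, j}$ exactly.

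With the orthogonality in hand, the forward implication proceeds by substituting $u_k = \sum_{j = 0}^{k}\binom{k}{j} v_j$ into the right-hand side $\sum_{k = 0}^{n}(-1)^{n - k}\binom{n}{k} u_k$, swapping the order of finite summation, and invoking the relation so that the inner $k$-sum collapses to $\delta_{n , j}$ and only the $j = n$ term survives, giving $v_n$. The reverse implication is entirely symmetric: substituting $v_k = \sum_{j = 0}^{k}(-1)^{k - j}\binom{k}{j} u_j$ into $\sum_{k = 0}^{n}\binom{n}{k} v_k$ produces a double sum whose inner part is $\sum_{k = j}^{n}(-1)^{k - j}\binom{n}{k}\binom{k}{j}$, which reduces to $\binom{n}{j}(-1 + 1)^{n - j} = \delta_{n , j}$ by the same argument, leaving $u_n$.

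I do not anticipate a substantive obstacle, since this is a textbook identity; the only care required is bookkeeping in interchanging the finite sums and correctly tracking the sign conventions when reindexing so that the binomial theorem applies in the form $(1 - 1)^{n - j}$.
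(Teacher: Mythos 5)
Your proposal is correct: the orthogonality relation $\sum_{k=j}^{n}(-1)^{n-k}\binom{n}{k}\binom{k}{j}=\delta_{n,j}$, proved via $\binom{n}{k}\binom{k}{j}=\binom{n}{j}\binom{n-j}{k-j}$ and the collapse $(1-1)^{n-j}$, does give both directions by substitution and interchange of finite sums. The paper offers no proof of this lemma at all --- it is stated with a reference to Comtet and Riordan --- so there is nothing to compare against directly; your argument is the standard one from those sources, and indeed the very same manipulations (the subset-of-subset identity and the binomial-theorem collapse) are exactly what the author uses inside the proof of Theorem~\ref{t1}, so your route is fully consistent with the paper's toolkit.
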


\begin{proof}[Proof of Theorem \ref{t1}]
Suppose that ${(T_n(X))}_n$ is Carlitz-type. So there exists a real sequence ${(a_n)}_{n \in \N_0}$ for which we have for all $n \in \N_0$:
$$
T_n(X) = (q - 1)^{- n} \sum_{k = 0}^{n} (-1)^{n - k} \binom{n}{k} a_k q^{k X} .
$$
In particular, we have for all $n \in \N_0$:
$$
t_n := T_n(0) = (q - 1)^{- n} \sum_{k = 0}^{n} (-1)^{n - k} \binom{n}{k} a_k ,
$$
that is
$$
(q - 1)^n t_n = \sum_{k = 0}^{n} (-1)^{n - k} \binom{n}{k} a_k .
$$
By inverting this last formula (using Lemma \ref{l1}), we derive that for all $n \in \N_0$:
$$
a_n = \sum_{k = 0}^{n} \binom{n}{k} (q - 1)^k t_k .
$$
Then, by inserting this into the expression of $T_n(X)$, we obtain that for all $n \in \N_0$:
\begin{align*}
T_n(X) & = (q - 1)^{- n} \sum_{k = 0}^{n} (-1)^{n - k} \binom{n}{k} \left(\sum_{i = 0}^{k} \binom{k}{i} (q - 1)^i t_i\right) q^{k X} \\
& = (q - 1)^{- n} \sum_{k = 0}^{n} \sum_{i = 0}^{k} (-1)^{n - k} \binom{n}{k} \binom{k}{i} (q - 1)^i t_i q^{k X} .
\end{align*}
By inverting the summations and remarking that $\binom{n}{k} \binom{k}{i} = \binom{n}{i} \binom{n - i}{k - i}$ (for all $n \in \N_0$, $k \in \{0 , 1 , \dots , n\}$, and $i \in \{0 , 1 , \dots , k\}$), we derive that for all $n \in \N_0$:
\begin{align*}
T_n(X) & = (q - 1)^{- n} \sum_{0 \leq i \leq n} \sum_{i \leq k \leq n} (-1)^{n - k} \binom{n}{i} \binom{n - i}{k - i} (q - 1)^i t_i q^{k X} \\
& \hspace*{-1cm}= (q - 1)^{- n} \sum_{0 \leq i \leq n} \sum_{0 \leq j \leq n - i} (-1)^{n - i - j} \binom{n}{i} \binom{n - i}{j} (q - 1)^i t_i q^{(i + j) X} ~~~ (\text{where we set $j = k - i$}) \\
& \hspace*{-1cm}= (q - 1)^{- n} \sum_{0 \leq i \leq n} \binom{n}{i} (q - 1)^i t_i q^{i X} \sum_{0 \leq j \leq n - i} \binom{n - i}{j} q^{j X} (-1)^{n - i - j} .
\end{align*}
But according to the binomial formula, we have for all $n \in \N_0$ and all $i \in \{0 , 1 , \dots , n\}$:
$$
\sum_{0 \leq j \leq n - i} \binom{n - i}{j} q^{j X} (-1)^{n - i - j} = \left(q^X - 1\right)^{n - i} = (q - 1)^{n - i} {[X]}^{n - i} .
$$
By inserting this into the last obtained expression for $T_n(X)$, we finally derive that for all $n \in \N_0$:
$$
T_n(X) = \sum_{0 \leq i \leq n} \binom{n}{i} t_i q^{i X} {[X]}^{n - i} ,
$$
as required.

Conversely, suppose that we have for all $n \in \N_0$:
$$
T_n(X) = \sum_{k = 0}^{n} \binom{n}{k} t_k q^{k X} {[X]}^{n - k} ,
$$
and define for all $n \in \N_0$:
\begin{align}
a_n & := \sum_{k = 0}^{n} \binom{n}{k} (q - 1)^k t_k \label{eq10} \\[-6mm]
\intertext{and} \notag \\[-12mm]
S_n(X) & := (q - 1)^{- n} \sum_{k = 0}^{n} (-1)^{n - k} \binom{n}{k} a_k q^{k X} . \notag
\end{align}
Equivalently, ${(S_n(X))}_{n \in \N_0}$ is a Carlitz-type $q$-polynomial sequence, with associated sequence ${(a_n)}_{n \in \N_0}$. So, according to the first part of this proof, we have for all $n \in \N_0$:
\begin{equation}\label{eq11}
S_n(X) = \sum_{k = 0}^{n} \binom{n}{k} s_k q^{k X} {[X]}^{n - k} ,
\end{equation}
where
\begin{equation}\label{eq12}
s_n := S_n(0) = (q - 1)^{- n} \sum_{k = 0}^{n} (-1)^{n - k} \binom{n}{k} a_k .
\end{equation}
But the inversion of Formula \eqref{eq10} (using Lemma \ref{l1}) gives for all $n \in \N_0$:
$$
(q - 1)^n t_n = \sum_{k = 0}^{n} (-1)^{n - k} \binom{n}{k} a_k ,
$$
that is
$$
t_n = (q - 1)^{- n} \sum_{k = 0}^{n} (-1)^{n - k} \binom{n}{k} a_k = s_n
$$
(according to \eqref{eq12}). Hence we have for all $n \in \N_0$:
\begin{align*}
T_n(X) & = \sum_{k = 0}^{n} \binom{n}{k} t_k q^{k X} {[X]}^{n - k} \\
& = \sum_{k = 0}^{n} \binom{n}{k} s_k q^{k X} {[X]}^{n - k} \\
& = S_n(X) ~~~~~~~~~~ (\text{according to \eqref{eq11}}) .
\end{align*}
Consequently ${(T_n(X))}_n = {(S_n(X))}_n$ is Carlitz-type, as required. This complete this proof.
\end{proof}

\begin{rmk}
From the perspective of Theorem \ref{t1}, the Carlitz-type $q$-polynomial sequences can be regarded as a $q$-analogue of the Appell polynomial sequences.
\end{rmk}

\begin{expls}\label{expls1}
In \cite{car}, Carlitz introduced two important $q$-polynomial sequences, which are ${(\eta_n(X))}_{n \in \N_0}$ and ${(\beta_n(X))}_{n \in \N_0}$, defined for all $n \in \N_0$ by:
\begin{align}
\eta_n(X) & := (q - 1)^{- n} \sum_{k = 0}^{n} (-1)^{n - k} \binom{n}{k} \frac{k}{[k]} q^{k X} , \label{eq15} \\
\beta_n(X) & := (q - 1)^{- n} \sum_{k = 0}^{n} (-1)^{n - k} \binom{n}{k} \frac{k + 1}{[k + 1]} q^{k X} \label{eq16}
\end{align}
(where we take by convention $\frac{k}{[k]} = 1$ for $k = 0$). From their definitions, it is clear that ${(\eta_n(X))}_n$ and ${(\beta_n(X))}_n$ are both Carlitz-type with associated real sequences ${(\frac{k}{[k]})}_k$ and ${(\frac{k + 1}{[k + 1]})}_k$ respectively. In addition, Carlitz \cite{car} explored the real sequences constituted of the values of ${(\eta_n(X))}_n$ and ${(\beta_n(X))}_n$ at $X = 0$; that is the sequences ${(\eta_n)}_{n \in \N_0}$ and ${(\beta_n)}_{n \in \N_0}$, defined by:
$$
\eta_n := \eta_n(0) ~~\text{and}~~ \beta_n := \beta_n(0) ~~~~~~~~~~ (\forall n \in \N_0) .
$$
So, according to Theorem \ref{t1}, we have important alternative expressions for $\eta_n(X)$ and $\beta_n(X)$ ($n \in \N_0$), which are
\begin{align}
\eta_n(X) & = \sum_{k = 0}^{n} \binom{n}{k} \eta_k q^{k X} {[X]}^{n - k} , \label{eq17} \\
\beta_n(X) & = \sum_{k = 0}^{n} \binom{n}{k} \beta_k q^{k X} {[X]}^{n - k} . \label{eq18}
\end{align}
Using the symbolic calculus, Formulas \eqref{eq17} and \eqref{eq18} can be written more simply as:
\begin{align}
\eta_n(X) & = \left(q^X \eta + [X]\right)^n , \label{eq19} \\
\beta_n(X) & = \left(q^X \beta + [X]\right)^n
\end{align}
(for all $n \in \N_0$).
\end{expls}

We now see that Carlitz-type $q$-polynomial sequences naturally emerge when deriving closed-form expressions for $q$-analogues of power sums of consecutive positive integers. We begin with the following theorem:

\begin{thm}\label{t2}
Let $r$ be a positive integer. For all nonnegative integer $n$ and all positive integer $N$, we have
$$
\sum_{k = 0}^{N - 1} q^{r k} {[k]}^n = q^{r N} S_n(N) - S_n(0) ,
$$
where ${(S_k(X))}_{k \in \N_0}$ is the Carlitz-type $q$-polynomial sequence associated to the real sequence of general term $a_k = \frac{1}{q - 1} \frac{1}{[k + r]}$ ($\forall k \in \N_0$).
\end{thm}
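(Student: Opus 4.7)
The plan is to attack the identity by a direct computation of the right-hand side from the definition of a Carlitz-type sequence. Writing $a_k = \frac{1}{q-1}\cdot\frac{1}{[k+r]} = \frac{1}{q^{k+r}-1}$, the defining formula gives
$$
S_n(X) \;=\; (q-1)^{-n}\sum_{k=0}^{n}(-1)^{n-k}\binom{n}{k}\frac{q^{kX}}{q^{k+r}-1}.
$$
I would therefore form $q^{rN}S_n(N) - S_n(0)$ and observe that inside the $k$-th term the numerators combine cleanly as $q^{rN}q^{kN} - 1 = q^{(k+r)N}-1$, so the right-hand side becomes
$$
(q-1)^{-n}\sum_{k=0}^{n}(-1)^{n-k}\binom{n}{k}\frac{q^{(k+r)N}-1}{q^{k+r}-1}.
$$

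The next step is to recognize each fraction as a geometric sum: $\frac{q^{(k+r)N}-1}{q^{k+r}-1} = \sum_{j=0}^{N-1} q^{(k+r)j}$. Substituting this and swapping the order of summation, the inner $k$-sum for a fixed $j$ becomes
$$
(q-1)^{-n}\sum_{k=0}^{n}(-1)^{n-k}\binom{n}{k} q^{kj} \;=\; (q-1)^{-n}(q^{j}-1)^{n} \;=\; [j]^{n},
$$
by the binomial theorem and the definition $[j] = (q^{j}-1)/(q-1)$. Pulling out the factor $q^{rj}$ that remains from $q^{(k+r)j}$, the double sum collapses to $\sum_{j=0}^{N-1} q^{rj}[j]^{n}$, which is exactly the left-hand side. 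A sanity check at $n=0$ is immediate: $S_0(X) = a_0 = 1/(q^{r}-1)$, and the identity reduces to the elementary geometric sum $\sum_{k=0}^{N-1} q^{rk} = (q^{rN}-1)/(q^{r}-1)$.

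I do not anticipate a substantive obstacle; the only mildly delicate point is bookkeeping the factor $q^{rN}$ so that it combines with $q^{kN}$ to produce $q^{(k+r)N}$ and thereby triggers the geometric-series telescoping against the denominator $q^{k+r}-1$. This is precisely the mechanism that forces the particular choice $a_k = 1/(q^{k+r}-1)$ and explains why the shift by $r$ in the index of $[k+r]$ matches the weight $q^{rk}$ in the sum. Everything else is a clean application of the binomial theorem, and no appeal to Theorem~\ref{t1} is required in this direction (although Theorem~\ref{t1} would be useful if one wanted to re-express $S_n(X)$ in the symbolic form $(q^{X}s + [X])^{n}$).
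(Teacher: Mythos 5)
Your proof is correct, and it is essentially the paper's own computation run in the reverse direction: the paper starts from $\sum_{k=0}^{N-1} q^{rk}[k]^n$, expands $[k]^n=(q-1)^{-n}(q^k-1)^n$ by the binomial theorem, swaps sums, and evaluates the geometric series to arrive at $q^{rN}S_n(N)-S_n(0)$, whereas you start from the right-hand side and undo exactly those steps. Since every step is an equality, the two arguments are the same; your $n=0$ sanity check is a nice extra touch.
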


\begin{proof}
Let $n \in \N_0$ and $N \in \N$ be fixed. We have
\begin{align*}
\sum_{k = 0}^{N - 1} q^{r k} {[k]}^n & = \sum_{k = 0}^{N - 1} q^{r k} \left(\frac{q^k - 1}{q - 1}\right)^n \\
& = (q - 1)^{- n} \sum_{k = 0}^{N - 1} q^{r k} \left(q^k - 1\right)^n \\
& = (q - 1)^{- n} \sum_{k = 0}^{N - 1} q^{r k} \sum_{i = 0}^{n} \binom{n}{i} q^{k i} (-1)^{n - i} ~~~~~~~~~~ (\text{by the binomial formula}) \\
& = (q - 1)^{- n} \sum_{i = 0}^{n} (-1)^{n - i} \binom{n}{i} \sum_{k = 0}^{N - 1} q^{(i + r) k} \\
& = (q - 1)^{- n} \sum_{i = 0}^{n} (-1)^{n - i} \binom{n}{i} \frac{q^{(i + r)N} - 1}{q^{i + r} - 1} \\
& = (q - 1)^{- n} \sum_{i = 0}^{n} (-1)^{n - i} \binom{n}{i} \frac{1}{(q - 1) [i + r]} \left(q^{(i + r) N} - 1\right) .
\end{align*}
Setting for all $n \in \N_0$:
$$
S_n(X) := (q - 1)^{- n} \sum_{i = 0}^{n} (-1)^{n - i} \binom{n}{i} \frac{1}{(q - 1) [i + r]} q^{i X}
$$
(so ${(S_n(X))}_n$ is the Carlitz-type $q$-polynomial sequence associated to the real sequence of general term $a_i = \frac{1}{(q - 1) [i + r]}$), we conclude that:
$$
\sum_{n = 0}^{N - 1} q^{r k} {[k]}^n = q^{r N} S_n(N) - S_n(0) ,
$$
as required.
\end{proof}

The following generalization of Theorem \ref{t2} aims to provide a larger class of real sequences, for which the associated Carlitz-type $q$-polynomial sequences naturally arise when evaluating $q$-analogues of power sums of consecutive positive integers.

\begin{thm}\label{t3}    
Let $r$ be a positive integer and $d$ be a nonnegative integer such that $r \geq d$. For all integer $n \geq d$ and all positive integer $N$, we have
$$
\sum_{k = 0}^{N - 1} q^{r k} {[k]}^{n - d} = \frac{q^{(r - d) N} T_n(N) - T_n(0)}{n (n - 1) \cdots (n - d + 1)} ,
$$
where ${(T_k(X))}_{k \in \N_0}$ is the Carlitz-type $q$-polynomial sequence associated to the real sequence of general term
$$
b_k = (q - 1)^{d - 1} \frac{k (k - 1) \cdots (k - d + 1)}{[k + r - d]} ~~~~~~~~~~ (\forall k \in \N_0) .
$$
\end{thm}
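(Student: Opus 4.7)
My plan is to imitate the proof of Theorem \ref{t2}, with an extra combinatorial identity absorbing the falling factorial $k(k-1)\cdots(k-d+1)$ that now appears in the associated sequence. I would first compute the left-hand side by writing
$$
{[k]}^{n - d} = (q - 1)^{-(n - d)} \left(q^k - 1\right)^{n - d},
$$
expanding by the binomial theorem, swapping the two sums, and evaluating the inner geometric sum $\sum_{k = 0}^{N - 1} q^{(r + j) k}$. This gives
$$
\sum_{k = 0}^{N - 1} q^{r k} {[k]}^{n - d} = (q - 1)^{d - n - 1} \sum_{j = 0}^{n - d} (-1)^{n - d - j} \binom{n - d}{j} \frac{q^{(r + j) N} - 1}{[j + r]} ,
$$
exactly as in the proof of Theorem \ref{t2}, but with $n$ replaced by $n - d$.

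Next, I would expand $T_n(X)$ from its definition as a Carlitz-type $q$-polynomial sequence. Since $k (k - 1) \cdots (k - d + 1) = 0$ for $k < d$, the sum in $T_n(X)$ effectively starts at $k = d$; using the elementary identity
$$
\binom{n}{k} \frac{k!}{(k - d)!} = n (n - 1) \cdots (n - d + 1) \binom{n - d}{k - d} ,
$$
valid for $d \leq k \leq n$, and then substituting $j = k - d$, I would obtain
$$
T_n(X) = (q - 1)^{d - n - 1} \, n(n-1)\cdots(n - d + 1) \, q^{d X} \sum_{j = 0}^{n - d} (-1)^{n - d - j} \binom{n - d}{j} \frac{q^{j X}}{[j + r]} .
$$

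The final step is a direct comparison: evaluating this at $X = N$ and multiplying by $q^{(r - d) N}$ reproduces the ``$q^{r N}$ piece'' from the LHS computation, while evaluating at $X = 0$ reproduces the ``constant piece''. Dividing by $n(n-1)\cdots(n-d+1)$ yields the claim. The main obstacle is purely bookkeeping: tracking the exponent of $(q - 1)$ and correctly combining the factor $(q-1)^{d - 1}$ in $b_k$, the global $(q - 1)^{-n}$ in the definition of a Carlitz-type sequence, and the $(q - 1)^{-(n - d)}$ coming from the binomial expansion of $[k]^{n-d}$; essentially the whole calculation reduces to Theorem \ref{t2} after the index shift $j = k - d$ and the replacement of the rank parameter $r$ by $r - d$, once the falling factorial is absorbed into the binomial coefficients.
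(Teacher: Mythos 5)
Your proposal is correct and follows essentially the same route as the paper: both reduce the sum to the Theorem \ref{t2} computation with $n$ replaced by $n-d$ and the rank $r$ shifted to $r-d$, and both hinge on the identity $\binom{n}{k}\,k(k-1)\cdots(k-d+1) = n(n-1)\cdots(n-d+1)\binom{n-d}{k-d}$ together with the index shift $j = k-d$. The only (cosmetic) difference is direction: the paper defines $T_k(X) := k(k-1)\cdots(k-d+1)\,q^{dX}S_{k-d}(X)$ and verifies it is Carlitz-type with associated sequence $(b_k)$, whereas you expand the claimed $T_n(X)$ from its definition and match it against the directly computed left-hand side.
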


\begin{proof}
Let $n$ and $N$ be fixed integers such that $n \geq d$ and $N \geq 1$. According to Theorem \ref{t2}, we have that
$$
\sum_{k = 0}^{N - 1} q^{r k} {[k]}^{n - d} = q^{r N} S_{n - d}(N) - S_{n - d}(0) ,
$$
where ${(S_k(X))}_{k \in \N_0}$ is the Carlitz-type $q$-polynomial sequence associated to the real sequence of general term $a_k = \frac{1}{(q - 1) [k + r]}$ ($\forall k \in \N_0$). By introducing the $q$-polynomial sequence ${(T_k(X))}_{k \in \N_0}$ defined by:
$$
T_k(X) = \begin{cases}
0 & \text{if } k < d \\
k (k - 1) \cdots (k - d + 1) q^{d X} S_{k - d}(X) & \text{else}
\end{cases} ~~~~~~~~~~ (\forall k \in \N_0) ,
$$
the closed form of the sum of $q$-powers above is transformed to
$$
\sum_{k = 0}^{N - 1} q^{r k} {[k]}^{n - d} = \frac{q^{(r - d) N} T_n(N) - T_n(0)}{n (n - 1) \cdots (n - d + 1)} .
$$
So it remains to verify that ${(T_k(X))}_k$ is Carlitz-type with the associated sequence that given by the theorem. For an integer $k \geq d$, we have
$$
T_k(X) := k (k - 1) \cdots (k - d + 1) q^{d X} S_{k - d}(X) \\
$$
\begin{align*}
& = k (k - 1) \cdots (k - d + 1) q^{d X} \cdot (q - 1)^{- k + d} \sum_{i = 0}^{k - d} (-1)^{k - d - i} \binom{k - d}{i} \frac{1}{(q - 1) [i + r]} q^{i X} \\
& = k (k - 1) \cdots (k - d + 1) q^{d X} (q - 1)^{- k + d} \sum_{j = d}^{k} (-1)^{k - j} \binom{k - d}{j - d} \frac{1}{(q - 1) [j + r - d]} q^{(j - d) X}
\end{align*}
(by setting $j = i + d$). But since $k (k - 1) \cdots (k - d + 1) \binom{k - d}{j - d} = j (j - 1) \cdots (j - d + 1) \binom{k}{j}$ (for all integers $k , j \geq d$), it follows that for all integer $k \geq d$:
\begin{align*}
T_k(X) & = (q - 1)^{- k} \sum_{j = d}^{k} (-1)^{k - j} \binom{k}{j} (q - 1)^{d - 1} \frac{j (j - 1) \cdots (j - d + 1)}{[j + r - d]} q^{j X} \\
& = (q - 1)^{- k} \sum_{j = 0}^{k} (-1)^{k - j} \binom{k}{j} (q - 1)^{d - 1} \frac{j (j - 1) \cdots (j - d + 1)}{[j + r - d]} q^{j X} .
\end{align*}
By remarking that the last formula for $T_k(X)$ is obviously valid for $0 \leq k < d$, we conclude that ${(T_k(X))}_k$ is Carlitz-type and its associated real sequence has the general term \linebreak $b_j = (q - 1)^{d - 1} \frac{j (j - 1) \cdots (j - d + 1)}{[j + r - d]}$ ($\forall j \in \N_0$). The theorem is thus proved.
\end{proof}

\begin{rmk}\label{rmk1}
It is important to note that the general term of the real sequence ${(b_k)}_{k \in \N_0}$ associated to the $q$-polynomial sequence in Theorem \ref{t3} has the form $\frac{P(k)}{[k + s]}$, with $P$ is a real polynomial and $s$ is a nonnegative integer. This fact will be used below for establishing some important symbolic formulas.
\end{rmk}

In the following theorem, we will establish symbolic formulas generating the values at $0$ of some particular Carlitz-type $q$-polynomial sequences.

\begin{thm}\label{t4}
Let $r$ and $d$ be two nonnegative integers and $P$ be a real polynomial of degree $d$. Let also ${(T_n(X))}_{n \in \N_0}$ be the Carlitz-type $q$-polynomial sequence whose associated real sequence is ${\left(\frac{P(k)}{[k + r]}\right)}_{k \in \N_0}$ and ${(t_n)}_{n \in \N_0}$ be the real sequence defined by $t_n := T_n(0)$ ($\forall n \in \N_0$). Then ${(t_n)}_n$ satisfies the symbolic formula:
$$
q^r \left(q t + 1\right)^n = t^n ~~~~~~~~~~ (\forall n > d) .
$$
\end{thm}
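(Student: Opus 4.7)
The symbolic formula $q^{r}(qt+1)^{n}=t^{n}$ means, once we replace $t^{k}$ by $t_{k}$, the plain identity
\begin{equation*}
q^{r}\sum_{k=0}^{n}\binom{n}{k}q^{k}t_{k}=t_{n}\qquad(\forall n>d).
\end{equation*}
My plan is to reduce this equality to a condition on the $n$-th forward difference of the polynomial $P$, which vanishes precisely when $n>d=\deg P$.

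First I would substitute the Carlitz-type definition
\begin{equation*}
t_{k}=(q-1)^{-k}\sum_{j=0}^{k}(-1)^{k-j}\binom{k}{j}\frac{P(j)}{[j+r]}
\end{equation*}
into the left-hand side, interchange the order of summation, and use the standard identity $\binom{n}{k}\binom{k}{j}=\binom{n}{j}\binom{n-j}{k-j}$ to factor out $\binom{n}{j}\frac{P(j)}{[j+r]}$. After the change of index $i=k-j$, the inner sum becomes
\begin{equation*}
\sum_{i=0}^{n-j}\binom{n-j}{i}\Big(\frac{-q}{q-1}\Big)^{i}=\Big(\frac{-1}{q-1}\Big)^{n-j}
\end{equation*}
by the binomial theorem. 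Collecting the factors of $(q-1)$ and $(-1)$ then yields the clean expression
\begin{equation*}
q^{r}\sum_{k=0}^{n}\binom{n}{k}q^{k}t_{k}=q^{r}(q-1)^{-n}\sum_{j=0}^{n}(-1)^{n-j}\binom{n}{j}\frac{P(j)\,q^{j}}{[j+r]}.
\end{equation*}

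Comparing this with $t_{n}=(q-1)^{-n}\sum_{j=0}^{n}(-1)^{n-j}\binom{n}{j}\frac{P(j)}{[j+r]}$ and clearing $(q-1)^{-n}$, the desired identity becomes
\begin{equation*}
\sum_{j=0}^{n}(-1)^{n-j}\binom{n}{j}\frac{P(j)}{[j+r]}\bigl(q^{j+r}-1\bigr)=0.
\end{equation*}
At this point the key observation is $q^{j+r}-1=(q-1)[j+r]$, which causes the denominators $[j+r]$ to cancel and leaves
\begin{equation*}
(q-1)\sum_{j=0}^{n}(-1)^{n-j}\binom{n}{j}P(j)=0.
\end{equation*}
By formula \eqref{eqn2}, the sum on the left is exactly $(\Delta^{n}P)(0)$, and since $\deg P=d<n$ we have $\Delta^{n}P\equiv 0$, concluding the proof.

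The calculation is essentially mechanical; the only step that requires insight is recognizing that after expansion and the binomial simplification, the hypothesis $n>d$ enters in precisely the right form, with the factor $[j+r]$ in the denominator being cancelled automatically by $q^{j+r}-1$. This miraculous cancellation is really the content of the theorem, and it is what forces the associated sequence to have the specific shape $\frac{P(k)}{[k+r]}$ singled out in Remark \ref{rmk1}.
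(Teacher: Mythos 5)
Your argument is correct. The crucial identity you isolate at the end,
\begin{equation*}
\sum_{j=0}^{n}(-1)^{n-j}\binom{n}{j}\frac{P(j)}{[j+r]}\bigl(q^{j+r}-1\bigr)=(q-1)\,(\Delta^{n}P)(0)=0\qquad(n>d),
\end{equation*}
is exactly the engine of the paper's proof as well; the paper obtains it by computing $q^{r}T_{n}(1)-T_{n}(0)$ directly from the Carlitz-type representation of $T_{n}(X)$. Where you differ is in how you connect the symbolic expression $(qt+1)^{n}=\sum_{k}\binom{n}{k}q^{k}t_{k}$ to that computation: the paper simply invokes Theorem \ref{t1} to write $T_{n}(X)=\left(q^{X}t+[X]\right)^{n}$ and specializes to $X=1$, so that $(qt+1)^{n}=T_{n}(1)$ with no further work, whereas you substitute the Carlitz-type formula for each $t_{k}$ and carry out the double-sum/binomial-theorem manipulation by hand. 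That manipulation is in effect a re-proof of the $X=1$ instance of Theorem \ref{t1} (note $[1]=1$, so $T_{n}(1)=\sum_{k}\binom{n}{k}t_{k}q^{k}$), so your proof is self-contained at the cost of being longer; the paper's is shorter because it reuses machinery already established. Both routes are valid, and your closing observation about the role of the shape $\frac{P(k)}{[k+r]}$ correctly identifies the point of Remark \ref{rmk1}.
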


\begin{proof}
Let $n > d$ be a fixed integer. From the definition of $T_n(X)$, we have that
$$
T_n(X) = (q - 1)^{- n} \sum_{k = 0}^{n} (-1)^{n - k} \binom{n}{k} \frac{P(k)}{[k + r]} q^{k X} .
$$
Thus
\begin{align*}
q^r T_n(1) - T_n(0) & = (q - 1)^{- n} \sum_{k = 0}^{n} (-1)^{n - k} \binom{n}{k} \frac{P(k)}{[k + r]} \left(q^{k + r} - 1\right) \\
& = (q - 1)^{- n + 1} \sum_{k = 0}^{n} (-1)^{n - k} \binom{n}{k} P(k) ~~~~~~~~ (\text{since } q^{k + r} - 1 = (q - 1) [k + r]) \\
& = (q - 1)^{- n + 1} \left(\Delta^n P\right)(0) ~~~~~~~~~~~~~~~~~~~~~~~ (\text{according to \eqref{eqn2}}) \\
& = 0 ~~~~~~~~~~ (\text{since } n > \deg{P} = d) .
\end{align*}
Consequently, we have
\begin{equation}\label{eq13}
q^r T_n(1) = T_n(0) = t_n .
\end{equation}
On the other hand, we have (according to Theorem \ref{t1}) the symbolic formula:
$$
T_n(X) = \left(q^X t + [X]\right)^n .
$$
In particular, we have symbolically
$$
T_n(1) = \left(q t + 1\right)^n .
$$
By inserting this into \eqref{eq13}, it finally follows that we have symbolically
$$
q^r \left(q t + 1\right)^n = t^n ,
$$
as required.
\end{proof}

\begin{expls}~
\begin{enumerate}
\item Since the general term of the associated real sequences of the Carlitz-type $q$-polynomial sequences ${(\eta_n(X))}_{n \in \N_0}$ and ${(\beta_n(X))}_{n \in \N_0}$ are respectively $\frac{k}{[k]}$ and $\frac{k + 1}{[k + 1]}$ then (according to Theorem \ref{t4}) the real sequences ${(\eta_n)}_{n \in \N_0}$ and ${(\beta_n)}_{n \in \N_0}$ satisfy the symbolic formulas:
\begin{equation*}
\begin{split}
\left(q \eta + 1\right)^n & = \eta^n \\
q \left(q \beta + 1\right)^n & = \beta^n
\end{split} ~~~~~~~~~~ (\forall n > 1) .
\end{equation*}
\item Let us put ourselves in the situation of Theorem \ref{t3} and set $t_k := T_k(0)$ (for all $k \in \N_0$). Since the general term of the associated real sequence of the Carlitz-type $q$-polynomial sequence ${(T_k(X))}_k$ has the form $\frac{P(k)}{[k + r - d]}$, where $P$ is a real polynomial of degree $d$, then (according to Theorem \ref{t4}) the sequence ${(t_k)}_k$ satisfy the symbolic formula:
$$
q^{r - d} \left(q t + 1\right)^k = t^k ~~~~~~~~~~ (\forall k > d) .
$$
\end{enumerate}
\end{expls}

In the following proposition, we provide some important properties of Carlitz-type $q$-polynomial sequences. For simplicity, given a real or functional sequence ${(f_n)}_{n \in \N_0}$, we denote (with a slight abuse of notation) by ${(n f_{n - 1})}_{n \in \N_0}$ the sequence whose terms are defined as 
\[\begin{cases} 0 & \text{if } n = 0 , \\ n f_{n - 1} & \text{otherwise} \end{cases} .\]  

\begin{prop}\label{p1}
Let ${(T_n(X))}_{n \in \N_0}$ be a Carlitz-type $q$-polynomial sequence and ${(a_k)}_{k \in \N_0}$ be its associated real sequence. Then the following properties hold.
\begin{enumerate}
\item The $q$-polynomial sequence ${\left(q^X n T_{n - 1}(X)\right)}_{n \in \N_0}$ is Carlitz-type with associated real sequence ${\left((q - 1) k a_{k - 1}\right)}_{k \in \N_0}$.
\item If $T_0(X) = 0_{\E}$ (i.e., $a_0 = 0$) then the $q$-polynomial sequence ${\left(\frac{q^{- X} T_{n + 1}(X)}{n + 1}\right)}_{n \in \N_0}$ is Carlitz-type with associated real sequence ${\left(\frac{1}{q - 1} \frac{a_{k + 1}}{k + 1}\right)}_{k \in \N_0}$.
\item The sequence ${\left(q^{- X}\left(T_n(X) + (q - 1) T_{n + 1}(X)\right)\right)}_{n \in \N_0}$ is a Carlitz-type $q$-polynomial sequence with associated real sequence ${(a_{k + 1})}_{k \in \N_0}$.
\item The $q$-polynomial sequence ${\big(n\left(T_{n - 1}(X) + (q - 1) T_n(X)\right)\big)}_{n \in \N_0}$ is Carlitz-type with associated real sequence ${\left((q - 1) k a_k\right)}_{k \in \N_0}$.
\end{enumerate}
\end{prop}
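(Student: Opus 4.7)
The plan is to verify each of the four assertions directly from the definition of Carlitz-type: expand the given $q$-polynomial using
$T_n(X) = (q-1)^{-n}\sum_{k=0}^{n}(-1)^{n-k}\binom{n}{k} a_k q^{kX}$,
perform an appropriate index shift, and recognize the outcome as the Carlitz-type form with the claimed associated sequence. The only combinatorial identities required are $n\binom{n-1}{j-1} = j\binom{n}{j}$, $\tfrac{1}{n+1}\binom{n+1}{j+1} = \tfrac{1}{j+1}\binom{n}{j}$, and $\binom{n+1}{k} - \binom{n}{k} = \binom{n}{k-1}$, together with careful bookkeeping of the powers of $(q-1)$ and of the boundary terms.

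For part 1, the factor $q^X$ in $q^X n T_{n-1}(X)$ raises each $q^{kX}$ to $q^{(k+1)X}$; the substitution $j = k+1$ together with $n\binom{n-1}{j-1} = j\binom{n}{j}$ replaces $n\binom{n-1}{j-1}$ by $j\binom{n}{j}$, while absorbing one factor of $(q-1)$ converts $(q-1)^{-(n-1)}$ into $(q-1)^{-n}$. This yields the Carlitz-type form with associated sequence $((q-1)k\, a_{k-1})_k$, consistent with the convention that the $n=0$ term vanishes (equivalently $a_{-1}\cdot 0 = 0$). Part 2 is analogous: under the assumption $a_0 = 0$ the $k=0$ term of $T_{n+1}(X)$ vanishes, the substitution $j = k-1$ combined with $\tfrac{1}{n+1}\binom{n+1}{j+1} = \tfrac{1}{j+1}\binom{n}{j}$ rearranges the prefactors, and the factor $q^{-X}$ brings $q^{kX}$ down to $q^{jX}$, giving associated sequence $\bigl(\tfrac{1}{q-1}\cdot\tfrac{a_{k+1}}{k+1}\bigr)_k$.

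For part 3, expand $T_n(X)$ and $(q-1)T_{n+1}(X)$ over the common prefactor $(q-1)^{-n}$: for $0\le k\le n$ the two contributions merge via $\binom{n}{k} - \binom{n+1}{k} = -\binom{n}{k-1}$, and together with the isolated $k=n+1$ term they assemble, after the reindexing $j = k-1$, into a single sum involving $a_{j+1}\, q^{(j+1)X}$; multiplying by $q^{-X}$ produces the Carlitz-type form with associated sequence $(a_{k+1})_k$. Part 4 then follows with no further computation by applying the transformation of part 1 to the sequence of part 3: one obtains $q^X n\cdot q^{-X}\bigl(T_{n-1}(X)+(q-1)T_n(X)\bigr) = n\bigl(T_{n-1}(X)+(q-1)T_n(X)\bigr)$, whose associated sequence is $(q-1)k\cdot a_{(k-1)+1} = (q-1)k\, a_k$; alternatively, a direct calculation using $n\binom{n-1}{k} = (n-k)\binom{n}{k}$ to combine coefficients works just as well. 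The computations are routine, so there is no real conceptual obstacle; the main mild pitfall is the careful handling of boundary terms (the $k=n+1$ summand in part 3, the $k=0$ term in part 2, and the $n=0$ conventions in parts 1 and 4) so that each index shift remains compatible with the Carlitz-type formalism.
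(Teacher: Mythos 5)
Your proposal is correct and follows essentially the same route as the paper: direct expansion from the defining formula, index shifts combined with the identities $n\binom{n-1}{k-1}=k\binom{n}{k}$, $\tfrac{1}{n+1}\binom{n+1}{k+1}=\tfrac{1}{k+1}\binom{n}{k}$, and $\binom{n+1}{k}-\binom{n}{k}=\binom{n}{k-1}$ for items 1--3, and item 4 obtained by composing items 3 and 1. The handling of boundary terms and of the $n=0$ convention matches the paper's treatment as well.
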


\begin{proof}
By definition, we have for all $n \in \N_0$:
\begin{equation}\label{eq14}
T_n(X) = (q - 1)^{- n} \sum_{k = 0}^{n} (-1)^{n - k} \binom{n}{k} a_k q^{k X} .
\end{equation}
\textbullet{} Let us show the 1\up{st} item of the proposition. Using \eqref{eq14}, we have for all $n \in \N_0$:
\begin{align*}
q^X n T_{n - 1}(X) & = (q - 1)^{- n + 1} \sum_{k = 0}^{n - 1} (-1)^{n - 1 - k} n \binom{n - 1}{k} a_k q^{(k + 1) X} \\
& = (q - 1)^{- n} \sum_{k = 1}^{n} (-1)^{n - k} n \binom{n - 1}{k - 1} (q - 1) a_{k - 1} q^{k X} .
\end{align*}
Remarking that $n \binom{n - 1}{k - 1} = k \binom{n}{k}$ (for all $1 \leq k \leq n$), it follows that for all $n \in \N_0$:
$$
q^X n T_{n - 1}(X) = (q - 1)^{- n} \sum_{k = 0}^{n} (-1)^{n - k} \binom{n}{k} \left((q - 1) k a_{k - 1}\right) q^{k X} ,
$$
showing that the $q$-polynomial sequence ${\left(q^X n T_{n - 1}(X)\right)}_{n \in \N_0}$ is Carlitz-type with associated real sequence ${\left((q - 1) k a_{k - 1}\right)}_{k \in \N_0}$. The first item of the proposition is proved. \\
\textbullet{} Next, let us show the 2\up{nd} item of the proposition. So, suppose that $T_0(X) = 0_{\E}$, that is $a_0 = 0$. Using \eqref{eq14}, we have for all $n \in \N_0$:
\begin{align*}
q^{- X} \frac{T_{n + 1}(X)}{n + 1} & = (q - 1)^{- n - 1} \sum_{k = 1}^{n + 1} (-1)^{n + 1 - k} \frac{1}{n + 1} \binom{n + 1}{k} a_k q^{(k - 1) X} \\
& = (q - 1)^{- n} \sum_{k = 0}^{n} (-1)^{n - k} \frac{1}{n + 1} \binom{n + 1}{k + 1} \frac{1}{q - 1} a_{k + 1} q^{k X} .
\end{align*}
Remarking that $\frac{1}{n + 1} \binom{n + 1}{k + 1} = \frac{1}{k + 1} \binom{n}{k}$ (for all $0 \leq k \leq n$), it follows that for all $n \in \N_0$:
$$
q^{- X} \frac{T_{n + 1}(X)}{n + 1} = (q - 1)^{- n} \sum_{k = 0}^{n} (-1)^{n - k} \binom{n}{k} \left(\frac{1}{q - 1} \frac{a_{k + 1}}{k + 1}\right) q^{k X} ,
$$
showing that the $q$-polynomial sequence ${\left(q^{- X} \frac{T_{n + 1}(X)}{n + 1}\right)}_{n \in \N_0}$ is Carlitz-type with associated real sequence ${\left(\frac{1}{q - 1} \frac{a_{k + 1}}{k + 1}\right)}_{k \in \N_0}$. The second item of the proposition is proved. \\
\textbullet{} Now, let us show the 3\up{rd} item of the proposition. Using \eqref{eq14}, we have for all $n \in \N_0$:
\begin{align*}
q^{- X} \left(T_n(X) + (q - 1) T_{n + 1}(X)\right) & = (q - 1)^{- n} \sum_{k = 0}^{n} (-1)^{n - k} \binom{n}{k} a_k q^{(k - 1) X} \\
& \hspace*{2cm} + (q - 1)^{- n} \sum_{k = 0}^{n + 1} (-1)^{n + 1 - k} \binom{n + 1}{k} a_k q^{(k - 1) X} \\
& = (q - 1)^{- n} \sum_{k = 0}^{n + 1} (-1)^{n - k + 1} \left(\binom{n + 1}{k} - \binom{n}{k}\right) a_k q^{(k - 1) X} .
\end{align*}
But since $\binom{n + 1}{k} - \binom{n}{k} = \begin{cases}
0 & \text{if } k = 0 \\
\binom{n}{k - 1} & \text{if } k \geq 1 
\end{cases}$ (for all $0 \leq k \leq n + 1$), it follows that:
\begin{align*}
q^{- X} \left(T_n(X) + (q - 1) T_{n + 1}(X)\right) & = (q - 1)^{- n} \sum_{k = 1}^{n + 1} (-1)^{n - k + 1} \binom{n}{k - 1} a_k q^{(k - 1) X} \\
& = (q - 1)^{- n} \sum_{k = 0}^{n} (-1)^{n - k} \binom{n}{k} a_{k + 1} q^{k X} ,
\end{align*}
showing that the sequence ${\left(q^{- X} \left(T_n(X) + (q - 1) T_{n + 1}(X)\right)\right)}_{n \in \N_0}$ is $q$-polynomial of Carlitz-type with associated real sequence ${(a_{k + 1})}_{k \in \N_0}$. The third item of the proposition is proved. \\
\textbullet{} Finally, the 4\up{th} item of the proposition follows by successively applying its 3\up{rd} and 1\up{st} items, which have already been proven. This completes the proof.
\end{proof}

\begin{expl}
Since the real sequence associated to ${(\eta_n(X))}_{n \in \N_0}$ (which is a Carlitz-type $q$-polynomial sequence) is ${(\frac{k}{[k]})}_{k \in \N_0}$ then (according to Item 3 of Proposition \ref{p1}) the sequence ${\left(q^{- X} \left(\eta_n(X) + (q - 1) \eta_{n + 1}(X)\right)\right)}_{n \in \N_0}$ is a Carlitz-type $q$-polynomial sequence with associated real sequence ${\left(\frac{k + 1}{[k + 1]}\right)}_{k \in \N_0}$. Hence ${\left(q^{- X} \left(\eta_n(X) + (q - 1) \eta_{n + 1}(X)\right)\right)}$ coincides with ${\left(\beta_n(X)\right)}_n$; that is
\begin{equation}\label{eq25}
\beta_n(X) = q^{-X} \left(\eta_n(X) + (q - 1) \eta_{n + 1}(X)\right) ~~~~~~~~~~ (\forall n \in \N_0) .
\end{equation}
\end{expl}

We now derive from Theorem \ref{t3} formulas for $q$-analogues of sums of powers of positive integers, which involve the $q$-polynomial sequences ${(\eta_n(X))}_{n \in \N_0}$ and ${(\beta_n(X))}_{n \in \N_0}$ introduced by Carlitz in \cite{car}. The following corollaries are obtained:

\begin{coll}\label{c2}
For all positive integers $n$ and $N$, we have
$$
\sum_{k = 0}^{N - 1} q^k {[k]}^{n - 1} = \frac{\eta_n(N) - \eta_n}{n} .
$$ 
\end{coll}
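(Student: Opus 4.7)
The plan is to obtain the corollary as a direct specialization of Theorem \ref{t3}, matching parameters so that the Carlitz-type sequence that appears there coincides with $(\eta_n(X))_{n \in \N_0}$.

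Concretely, I would take $r = 1$ and $d = 1$ in Theorem \ref{t3} (the hypothesis $r \geq d$ is satisfied, and the condition $n \geq d$ becomes $n \geq 1$, which is exactly our assumption). With these choices the exponent $n - d$ of $[k]$ becomes $n - 1$, the prefactor $q^{rk}$ becomes $q^k$, and the factor $q^{(r - d)N} = q^0 = 1$ disappears, so the theorem reads
$$
\sum_{k = 0}^{N - 1} q^k {[k]}^{n - 1} = \frac{T_n(N) - T_n(0)}{n} ,
$$
where $(T_k(X))_{k \in \N_0}$ is the Carlitz-type $q$-polynomial sequence whose associated real sequence has general term
$$
b_k = (q - 1)^{d - 1} \frac{k (k - 1) \cdots (k - d + 1)}{[k + r - d]} = \frac{k}{[k]} .
$$

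The key observation is that this associated sequence $\left(\tfrac{k}{[k]}\right)_{k \in \N_0}$ is precisely the one that characterizes $(\eta_n(X))_n$, as recorded in \eqref{eq15} of Examples \ref{expls1}. Since a Carlitz-type $q$-polynomial sequence is determined by its associated real sequence (the definition gives $T_n(X)$ as an explicit linear combination of the $q^{kX}$'s with coefficients depending only on the $a_k$'s and $n$), I conclude that $T_n(X) = \eta_n(X)$ for every $n \in \N_0$. Substituting this identification into the formula above, and using $\eta_n(0) = \eta_n$ from the definition given in Examples \ref{expls1}, yields the announced identity.

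There is essentially no obstacle: the entire argument is an application of Theorem \ref{t3} with $(r, d) = (1, 1)$, followed by the identification of the resulting sequence with $(\eta_n(X))_n$ by comparing associated sequences. The only thing worth double-checking while writing is that the exponent of $q$ in the numerator, namely $q^{(r - d)N}$, indeed trivializes to $1$ when $r = d$, which is what makes the clean form $\eta_n(N) - \eta_n$ appear rather than a weighted difference.
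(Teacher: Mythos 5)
Your proposal is correct and is exactly the paper's proof, which consists of the single instruction to take $r = d = 1$ in Theorem \ref{t3}; your identification of the resulting associated sequence with that of $(\eta_n(X))_n$ is the intended reading. The only hair worth splitting is the index $k = 0$, where Theorem \ref{t3} forces $b_0 = 0$ (so that $T_0(X) = 0$) while $\eta_n(X)$ uses the convention $\tfrac{0}{[0]} = 1$; this makes $T_n(X)$ and $\eta_n(X)$ differ by the $X$-independent constant $(-1)^n (q-1)^{-n}$, which cancels in the difference $T_n(N) - T_n(0) = \eta_n(N) - \eta_n$, so the stated identity is unaffected (the paper glosses over this point as well).
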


\begin{proof}
Take in Theorem \ref{t3}: $r = d = 1$.
\end{proof}

\begin{coll}\label{c3}
For all positive integers $n$ and $N$, we have
$$
n \sum_{k = 0}^{N - 1} q^{2 k} {[k]}^{n - 1} + (q - 1) \sum_{k = 0}^{N - 1} q^k {[k]}^n = q^N \beta_n(N) - \beta_n .
$$
\end{coll}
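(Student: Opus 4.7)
The plan is to express each of the two sums on the left-hand side using Theorems \ref{t2} and \ref{t3}, then identify the resulting scalar combination of Carlitz-type $q$-polynomial sequences as $(\beta_n(X))_n$ via the associated real sequence $\left(\frac{k+1}{[k+1]}\right)_{k \in \N_0}$ recorded in Examples \ref{expls1}.

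First I would apply Theorem \ref{t3} with $r = 2$ and $d = 1$ (so $r \geq d$ and $n \geq d$ since $n \in \N$) to obtain
$$
n \sum_{k=0}^{N-1} q^{2k} {[k]}^{n-1} = q^N T_n(N) - T_n(0),
$$
where ${(T_n(X))}_{n \in \N_0}$ is the Carlitz-type $q$-polynomial sequence associated with the real sequence of general term $\frac{k}{[k+1]}$ (this comes from $(q-1)^{d-1}\frac{k(k-1)\cdots(k-d+1)}{[k+r-d]}$ with $r=2$, $d=1$). Next I would apply Theorem \ref{t2} with $r = 1$ to get
$$
\sum_{k=0}^{N-1} q^k {[k]}^{n} = q^N S_n(N) - S_n(0),
$$
where ${(S_n(X))}_{n \in \N_0}$ is the Carlitz-type $q$-polynomial sequence associated with $\frac{1}{(q-1)[k+1]}$. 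Multiplying this second identity by $(q-1)$ and adding it to the first gives
$$
n \sum_{k=0}^{N-1} q^{2k} {[k]}^{n-1} + (q-1) \sum_{k=0}^{N-1} q^k {[k]}^{n} = q^N \bigl(T_n(N) + (q-1) S_n(N)\bigr) - \bigl(T_n(0) + (q-1) S_n(0)\bigr).
$$

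The decisive step is to note that the defining expression of a Carlitz-type sequence is $\R$-linear in the associated real sequence, so that ${(T_n(X) + (q-1) S_n(X))}_n$ is again Carlitz-type, with associated sequence
$$
\frac{k}{[k+1]} + (q-1)\cdot \frac{1}{(q-1)[k+1]} = \frac{k+1}{[k+1]}.
$$
This is precisely the associated sequence of $(\beta_n(X))_n$, and since a Carlitz-type $q$-polynomial sequence is uniquely determined by its associated real sequence, I would conclude that $T_n(X) + (q-1) S_n(X) = \beta_n(X)$ for every $n \in \N_0$. Substituting $X=N$ and $X=0$ into this identity and recalling $\beta_n = \beta_n(0)$ produces exactly the claimed formula. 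The only point requiring care is picking the parameters $(r,d)$ in Theorem \ref{t3} and $r$ in Theorem \ref{t2} so that both denominators are $[k+1]$; no genuine obstacle is anticipated beyond this bookkeeping.
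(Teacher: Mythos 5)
Your proposal is correct and follows essentially the same route as the paper: the paper also decomposes the left-hand side using Theorem \ref{t3} with $(r,d)=(2,1)$ for the first sum and the case $(r,d)=(1,0)$ (which is just Theorem \ref{t2} with $r=1$) for the second, then identifies $T_n(X)+(q-1)S_n(X)$ as the Carlitz-type sequence with associated sequence $\frac{k+1}{[k+1]}$, i.e.\ $(\beta_n(X))_n$. Your appeal to linearity in the associated sequence and to the uniqueness of a Carlitz-type sequence given its associated sequence is exactly the argument the paper uses.
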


\begin{proof}
Let $n$ and $N$ be fixed positive integers. By applying Theorem \ref{t3} for the couples $(r , d) = (2 , 1)$ and $(r , d) = (1 , 0)$, we respectively obtain the two following formulas:
\begin{align*}
\sum_{k = 0}^{N - 1} q^{2 k} {[k]}^{n - 1} & = \frac{q^N T_n(N) - T_n(0)}{n} , \\
\sum_{k = 0}^{N - 1} q^k {[k]}^n & = q^N S_n(N) - S_n(0) ,
\end{align*}
where ${(T_k(X))}_{k \in \N_0}$ and ${(S_k(X))}_{k \in \N_0}$ represent the Carlitz-type $q$-polynomial sequences, associated respectively to the real sequences ${\left(\frac{k}{[k + 1]}\right)}_{k \in \N_0}$ and ${\left(\frac{1}{q - 1} \frac{1}{[k + 1]}\right)}_{k \in \N_0}$. By combining the two above formulas, we derive that:
$$
n \sum_{k = 0}^{N - 1} q^{2 k} {[k]}^{n - 1} + (q - 1) \sum_{k = 0}^{N - 1} q^k {[k]}^n = q^N U_n(N) - U_n(0) ,
$$
where
$$
U_k(X) := T_k(X) + (q - 1) S_k(X) ~~~~~~~~~~ (\forall k \in \N_0) .
$$
Next, from the fact that ${(T_k(X))}_k$ and ${(S_k(X))}_k$ are Carlitz-type $q$-polynomial sequences whose associated real sequences are respectively ${\left(\frac{k}{[k + 1]}\right)}_k$ and ${\left(\frac{1}{q - 1} \frac{1}{[k + 1]}\right)}_k$, we derive that ${(U_k(X))}_k$ is also a Carlitz-type $q$-polynomial sequence, with an associated real sequence having the general term:
$$
\frac{k}{[k + 1]} + (q - 1) \left(\frac{1}{q - 1} \frac{1}{[k + 1]}\right) = \frac{k + 1}{[k + 1]} ~~~~~~~~~~ (\forall k \in \N_0) .
$$
Consequently, ${(U_k(X))}_k$ is nothing else than the $q$-polynomial sequence ${(\beta_k(X))}_k$ of Carlitz $q$-Bernoulli polynomials. This completes the proof.
\end{proof}

\subsection{Extended Carlitz $q$-Bernoulli numbers and polynomials}\label{subsec2}

\subsubsection{Definitions and basic results}

We begin by introducing the following definition:

\begin{defi}
We define ${\left(\beta_n^{(r)}(X)\right)}_{r , n \in \N_0}$ recursively by:
\begin{alignat}{3}
\beta_n^{(0)}(X) & := \eta_n(X) &~~~~~~~~~~ (\forall n \in \N_0) , \label{eq20} \\
\beta_n^{(r + 1)}(X) & := q^{- X} \left(\beta_n^{(r)}(X) + (q - 1) \beta_{n + 1}^{(r)}(X)\right) &~~~~~~~~~~ (\forall r , n \in \N_0) . \label{eq21}
\end{alignat}
Besides, we define ${\left(\beta_n^{(r)}\right)}_{r , n \in \N_0}$ by:
\begin{equation}\label{eq22}
\beta_n^{(r)} := \beta_n^{(r)}(0) ~~~~~~~~~~ (\forall r , n \in \N_0) .
\end{equation}
So, it follows from \eqref{eq20} and \eqref{eq21} that:
\begin{alignat}{3}
\beta_n^{(0)} & = \eta_n &~~~~~~~~~~ (\forall n \in \N_0) , \label{eq23} \\
\beta_n^{(r + 1)} & = \beta_n^{(r)} + (q - 1) \beta_{n + 1}^{(r)} &~~~~~~~~~~ (\forall r , n \in \N_0) . \label{eq24}
\end{alignat}
\end{defi}

Since ${\left(\beta_n^{(0)}(X)\right)}_{n \in \N_0} = {\left(\eta_n(X)\right)}_{n \in \N_0}$ is a Carlitz-type $q$-polynomial sequence (see Examples \ref{expls1}) then (according to Item 3 of Proposition \ref{p1} and a simple induction on $r$) all the sequences ${\left(\beta_n^{(r)}(X)\right)}_{n \in \N_0}$ ($r \in \N_0$) are Carlitz-type $q$-polynomial sequences. 

\begin{defi}\label{defi1}
Giving $r \in \N_0$, the $q$-polynomials $\beta_n^{(r)}(X)$ ($n \in \N_0$) are called \textit{the extended Carlitz $q$-Bernoulli polynomials of order $r$}, and the numbers $\beta_n^{(r)}$ ($n \in \N_0$) are called \textit{the extended Carlitz $q$-Bernoulli numbers of order $r$}.
\end{defi}

Taking $r = 0$ in \eqref{eq21}, we find in particular that for all $n \in \N_0$:
$$
\beta_n^{(1)}(X) = q^{- X} \left(\eta_n(X) + (q - 1) \eta_{n + 1}(X)\right) = \beta_n(X)
$$
(according to \eqref{eq25}). So, the Carlitz $q$-Bernoulli polynomials of order $1$ are nothing else than the Carlitz $q$-Bernoulli polynomials $\beta_n(X)$ (introduced by Carlitz in \cite{car}). Consequently, the Carlitz $q$-Bernoulli numbers of order $1$ are nothing else than the Carlitz $q$-Bernoulli numbers $\beta_n$ of \cite{car}. Furthermore, Carlitz \cite{car} showed that the $q$-polynomials $\beta_n(X) = \beta_n^{(1)}(X)$ ($n \in \N_0$) are well-defined for $q = 1$ and coincide with the Bernoulli polynomials when specializing $q$ to $1$; that is $\beta_n^{(1)}(X)\vert_{q = 1} = B_n(X)$ for all $n \in \N_0$. Consequently, in view of \eqref{eq21}, the $q$-polynomials $\beta_n^{(r)}(X)$ ($r \in \N$, $n \in \N_0$) are also well-defined for $q = 1$, and for all $r \in \N$ and $n \in \N_0$, we have:
$$
\beta_n^{(r + 1)}(X)\vert_{q = 1} = \beta_n^{(r)}(X)\vert_{q = 1} .
$$
By induction on $r$, it follows that for all $r \in \N$ and $n \in \N_0$:
$$
\beta_n^{(r)}(X)\vert_{q = 1} = \beta_n^{(1)}(X)\vert_{q = 1} = B_n(X) .
$$
This demonstrates that for all $r \in \N$, the $q$-polynomials $\beta_n^{(r)}(X)$ ($n \in \N_0$) can serve as a $q$-analogue of the classical Bernoulli polynomials. In view of \eqref{eq22}, it follows that for all $r \in \N$, the numbers $\beta_n^{(r)}$ ($n \in \N_0$) can also serve as a $q$-analogue of the classical Bernoulli numbers. This justify the terminology introduced in Definition \ref{defi1}.

The following proposition gathers several formulas for the Carlitz $q$-Bernoulli polynomials and numbers of a given order, which can be directly derived from the results of \S\ref{subsec1}.

\begin{prop}\label{p2}
For all $r , n \in \N_0$, we have that:
\begin{align}
\beta_n^{(r)}(X) & = (q - 1)^{- n} \sum_{k = 0}^{n} (-1)^{n - k} \binom{n}{k} \frac{k + r}{[k + r]} q^{k X} , \label{eq26} \\
\beta_n^{(r)} & = (q - 1)^{- n} \sum_{k = 0}^{n} (-1)^{n - k} \binom{n}{k} \frac{k + r}{[k + r]} , \label{eq27} \\
\beta_n^{(r)}(X) & = \sum_{k = 0}^{n} \binom{n}{k} \beta_k^{(r)} q^{k X} {[X]}^{n - k} . \label{eq28}
\end{align}
In addition, for a given $r \in \N_0$, the Carlitz $q$-Bernoulli polynomials and numbers of order $r$ satisfy the following symbolic formulas:
\begin{alignat}{3}
q^r \left(q \beta^{(r)} + 1\right)^n & = \left(\beta^{(r)}\right)^n & ~~~~~~~~~~ (\forall n \geq 2) , \label{eq29} \\
\beta_n^{(r)}(X) & = \left(q^X \beta^{(r)} + [X]\right)^n & ~~~~~~~~~~ (\forall n \in \N_0) . \label{eq30} 
\end{alignat}
\end{prop}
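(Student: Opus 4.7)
The entire proposition collapses to one structural observation, which I would isolate first: for every $r \in \N_0$, the $q$-polynomial sequence ${(\beta_n^{(r)}(X))}_{n \in \N_0}$ is Carlitz-type with associated real sequence ${\left(\frac{k + r}{[k + r]}\right)}_{k \in \N_0}$ (with the convention $\frac{k+r}{[k+r]}=1$ when $k+r=0$). I would establish this by induction on $r$. The base case $r = 0$ is exactly the content of Examples \ref{expls1}, since $\beta_n^{(0)}(X) = \eta_n(X)$ and ${(\eta_n(X))}_n$ is Carlitz-type with associated sequence ${(\frac{k}{[k]})}_k$. For the inductive step, the defining recursion \eqref{eq21} reads
$$
\beta_n^{(r+1)}(X) = q^{-X}\left(\beta_n^{(r)}(X) + (q-1)\beta_{n+1}^{(r)}(X)\right),
$$
so Item 3 of Proposition \ref{p1} (the ``shift'' operation) applies and yields that ${(\beta_n^{(r+1)}(X))}_n$ is Carlitz-type with associated sequence obtained by shifting the index of ${(\frac{k+r}{[k+r]})}_k$ by one, i.e.\ ${(\frac{k+r+1}{[k+r+1]})}_k$. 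This closes the induction.

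Formulas \eqref{eq26} and \eqref{eq27} then come for free: \eqref{eq26} is just the definition of ``Carlitz-type'' applied to the sequence identified above, and \eqref{eq27} is its specialization at $X = 0$. Formula \eqref{eq28} is immediate from Theorem \ref{t1}, since ${(\beta_n^{(r)}(X))}_n$ is Carlitz-type and $\beta_n^{(r)} = \beta_n^{(r)}(0)$; the symbolic form \eqref{eq30} is merely the compact rewriting already stated in Theorem \ref{t1}.

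For the symbolic identity \eqref{eq29}, I would appeal to Theorem \ref{t4} with $P(X) := X + r$, which is a polynomial of degree $d = 1$, and with shift parameter $r$. The associated sequence $\frac{k+r}{[k+r]} = \frac{P(k)}{[k+r]}$ is exactly of the form required in that theorem, so its hypothesis is satisfied and the conclusion $q^r(q \beta^{(r)} + 1)^n = (\beta^{(r)})^n$ holds for every $n > d = 1$, i.e.\ for all $n \geq 2$, as claimed.

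I do not expect a real obstacle in this proof; all the machinery has already been built in Subsection \ref{subsec1}. The only point requiring a small amount of care is the base case $r = 0$ of the induction, because of the convention $\frac{k+r}{[k+r]}\big|_{k=r=0} = 1$, which must be checked to be consistent with the convention $\frac{k}{[k]}\big|_{k=0} = 1$ used in Examples \ref{expls1}; it clearly is. After that, the proof is a clean assembly of Proposition \ref{p1} (Item 3) for the induction, Theorem \ref{t1} for the two equivalent expansions, and Theorem \ref{t4} for the symbolic recursion.
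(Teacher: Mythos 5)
Your proposal is correct and follows essentially the same route as the paper's proof: an induction on $r$ using Item 3 of Proposition \ref{p1} and the recursion \eqref{eq21} to identify the associated sequence as ${\left(\frac{k+r}{[k+r]}\right)}_k$, then Theorem \ref{t1} for \eqref{eq28} and \eqref{eq30} and Theorem \ref{t4} for \eqref{eq29}. Your explicit identification of $P(X) = X + r$ with $d = 1$ in the application of Theorem \ref{t4} is a slightly more detailed justification of the range $n \geq 2$ than the paper gives, but the argument is the same.
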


\begin{proof}
Using Item 3 of Proposition \ref{p1} together with Formula \eqref{eq21}, we immediately show by induction that for all $r \in \N_0$, the associated real sequence of the Carlitz-type $q$-polynomial sequence ${\left(\beta_n^{(r)}(X)\right)}_{n \in \N_0}$ is ${\left(\frac{k + r}{[k + r]}\right)}_{k \in \N_0}$. Formula \eqref{eq26} of the proposition then follows. Next, Formula \eqref{eq27} of the proposition follows by taking $X = 0$ in Formula \eqref{eq26}, which we have just proven. Formulas \eqref{eq28} and \eqref{eq30} of the proposition immediately follow from Theorem \ref{t1}. Finally, Formula \eqref{eq29} of the proposition immediately follows from Theorem \ref{t4}. This completes this proof.
\end{proof}

We now see, for a general $r \in \N_0$, how the $q$-polynomials $\beta_n^{(r)}(X)$ arise in $q$-analogues of sums of powers of consecutive positive integers. We have the following theorem providing a generalization of Corollaries \ref{c2} and \ref{c3}.

\begin{thm}\label{t5}
Let $r$ be a nonnegative integer. Then we have for all positive integers $n$ and $N$:
$$
n \sum_{k = 0}^{N - 1} q^{(r + 1) k} {[k]}^{n - 1} + (q - 1) r \sum_{k = 0}^{N - 1} q^{r k} {[k]}^n = q^{r N} \beta_n^{(r)}(N) - \beta_n^{(r)} .
$$
\end{thm}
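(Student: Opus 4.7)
The plan is to mimic the proof of Corollary \ref{c3} but in greater generality, combining two applications of Theorem \ref{t3} with carefully chosen parameters. The left-hand side of the claim is a linear combination of two $q$-power sums, each of which Theorem \ref{t3} expresses in closed form via an appropriate Carlitz-type $q$-polynomial sequence. Since Carlitz-type sequences form an $\R$-vector space whose elements are uniquely determined by their associated real sequences, the combination will itself be Carlitz-type, and it will only remain to check that its associated sequence is ${\left(\frac{k+r}{[k+r]}\right)}_{k \in \N_0}$, which is precisely that of ${\left(\beta_n^{(r)}(X)\right)}_n$ by Proposition \ref{p2}.

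Concretely, I would first fix positive integers $n$ and $N$ and apply Theorem \ref{t3} with the parameters $(r+1, 1)$ in place of $(r, d)$ to obtain
$$\sum_{k=0}^{N-1} q^{(r+1)k} {[k]}^{n-1} = \frac{q^{rN} T_n(N) - T_n(0)}{n},$$
where ${(T_k(X))}_{k \in \N_0}$ is the Carlitz-type $q$-polynomial sequence whose associated real sequence is ${\left(\frac{k}{[k+r]}\right)}_{k \in \N_0}$. Then I would apply Theorem \ref{t3} a second time, with parameters $(r, 0)$, to get
$$\sum_{k=0}^{N-1} q^{rk} {[k]}^n = q^{rN} S_n(N) - S_n(0),$$
where ${(S_k(X))}_{k \in \N_0}$ is the Carlitz-type $q$-polynomial sequence whose associated real sequence is ${\left(\frac{1}{(q-1)[k+r]}\right)}_{k \in \N_0}$ (the empty product in the numerator of the $b_k$'s giving $1$ when $d=0$).

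Multiplying the first identity by $n$ and the second by $(q-1)r$, then adding them, the left-hand side of the theorem is transformed into $q^{rN} U_n(N) - U_n(0)$, where I set $U_n(X) := T_n(X) + (q-1)\, r\, S_n(X)$. Finally, since each term in the definition of a Carlitz-type $q$-polynomial sequence is $\R$-linear in its associated sequence, ${(U_n(X))}_n$ is Carlitz-type with associated sequence
$$\frac{k}{[k+r]} + (q-1)\, r \cdot \frac{1}{(q-1)[k+r]} = \frac{k+r}{[k+r]} \qquad (\forall k \in \N_0).$$
Comparing with Formula \eqref{eq26} of Proposition \ref{p2} and using the uniqueness of the Carlitz-type representation, I conclude that $U_n(X) = \beta_n^{(r)}(X)$, which yields the desired identity.

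No step is genuinely hard; the only delicate point is bookkeeping at the boundary $d=0$ in Theorem \ref{t3} (the empty product in the numerator of $b_k$ equals $1$, and the empty denominator $n(n-1)\cdots(n-d+1)$ equals $1$), and at $r=0$, where the factor $(q-1)r$ annihilates the second sum and the identity reduces to Corollary \ref{c2} for $\eta_n = \beta_n^{(0)}$. Checking $r=1$ recovers Corollary \ref{c3}, providing a useful sanity check.
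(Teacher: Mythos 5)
Your proposal is correct and follows essentially the same route as the paper's own proof: two applications of Theorem \ref{t3} with the parameter pairs $(r+1,1)$ and $(r,0)$, a linear combination of the resulting Carlitz-type sequences, and identification of the combined associated sequence $\frac{k+r}{[k+r]}$ with that of ${\left(\beta_n^{(r)}(X)\right)}_n$ via Proposition \ref{p2}. Your remark on the boundary case $r=0$ (where the factor $(q-1)r$ kills the term that would otherwise require $r\geq 1$ in Theorem \ref{t3}) is a welcome extra precision that the paper leaves implicit.
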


\begin{proof}
Let $n$ and $N$ be fixed positive integers. By applying Theorem \ref{t3} for the couples $(r , 0)$ and $(r + 1 , 1)$ (instead of $(r , d)$), we respectively obtain the two following formulas:
\begin{align*}
\sum_{k = 0}^{N - 1} q^{r k} {[k]}^n & = q^{r N} T_n(N) - T_n(0) , \\
\sum_{k = 0}^{N - 1} q^{(r + 1) k} {[k]}^{n - 1} & = \frac{q^{r N} S_n(N) - S_n(0)}{n} ,
\end{align*}
where ${\left(T_k(X)\right)}_{k \in \N_0}$ and ${\left(S_k(X)\right)}_{k \in \N_0}$ are the Carlitz-type $q$-polynomial sequences associated respectively to the real sequences of general term:
$$
a_k = \frac{1}{q - 1} \frac{1}{[k + r]} ~~~~\text{and}~~~~ b_k = \frac{k}{[k + r]} ~~~~~~~~~~ (\forall k \in \N_0) .
$$
By combining the above formulas, we derive that:
$$
n \sum_{k = 0}^{N - 1} q^{(r + 1) k} {[k]}^{n - 1} + (q - 1) r \sum_{k = 0}^{N - 1} q^{rk} {[k]}^n = q^{r N} U_n(N) - U_n(0) ,
$$
where
$$
U_k(X) := S_k(X) + (q - 1) r T_k(X) ~~~~~~~~~~ (\forall k \in \N_0) .
$$
On the other hand, from the fact that ${\left(T_k(X)\right)}_k$ and ${\left(S_k(X)\right)}_k$ are Carlitz-type $q$-polynomial sequences whose associated real sequences are respectively ${\left(\frac{1}{q - 1} \frac{1}{[k + r]}\right)}_k$ and ${\left(\frac{k}{[k + r]}\right)}_k$, we derive that ${\left(U_k(X)\right)}_k$ is also a Carlitz-type $q$-polynomial sequence, with an associated real sequence having the general term:
$$
\frac{k}{[k + r]} + (q - 1) r \left(\frac{1}{q - 1} \frac{1}{[k + r]}\right) = \frac{k + r}{[k + r]} ~~~~~~~~~~ (\forall k \in \N_0) .
$$
Hence ${\left(U_k(X)\right)}_k$ coincides with ${\left(\beta_k^{(r)}(X)\right)}_k$, concluding to the required result.
\end{proof}

\subsubsection{Representation of the $\beta_n^{(r)}$'s as series}

We now turn our attention to the representation of the extended Carlitz $q$-Bernoulli numbers $\beta_n^{(r)}$ as numerical series. We have the following theorem:

\begin{thm}\label{t6}
Suppose that $\vert{q}\vert < 1$. Then we have for all $r \in \N_0$ and $n \in \N$:
$$
\beta_n^{(r)} = \begin{cases}
\displaystyle - n \sum_{k = 0}^{+ \infty} q^k {[k]}^{n - 1} + (1 - q)^{- n} & \text{if } r = 0 \\[3mm]
\displaystyle - n \sum_{k = 0}^{+ \infty} q^{(r + 1) k} {[k]}^{n - 1} + r (1 - q) \sum_{k = 0}^{+ \infty} q^{r k} {[k]}^n & \text{else}
\end{cases} .
$$
\end{thm}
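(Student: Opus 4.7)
The natural approach is to take $N \to \infty$ in the identity of Theorem \ref{t5}. Rewriting that identity as
\[
\beta_n^{(r)} = q^{r N} \beta_n^{(r)}(N) - n \sum_{k = 0}^{N - 1} q^{(r + 1) k} {[k]}^{n - 1} - (q - 1) r \sum_{k = 0}^{N - 1} q^{r k} {[k]}^n ,
\]
the whole proof reduces to two tasks: (a) show that both series on the right-hand side converge as $N \to \infty$, and (b) evaluate the boundary term $q^{r N} \beta_n^{(r)}(N)$ in the limit.

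For task (a), the hypothesis $\vabs{q} < 1$ gives $[k] = (1 - q^k) / (1 - q) \to (1 - q)^{-1}$, so the factors ${[k]}^{n - 1}$ and ${[k]}^n$ are bounded in $k$. The two series are therefore majorised by $\sum \vabs{q}^{(r + 1) k}$ and $\sum \vabs{q}^{r k}$ (times a constant), which are geometric and converge as soon as $r \geq 1$.

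For task (b), I would invoke the Appell-type expansion \eqref{eq28}, writing
\[
\beta_n^{(r)}(N) = \sum_{k = 0}^{n} \binom{n}{k} \beta_k^{(r)} q^{k N} {[N]}^{n - k} .
\]
Since $q^N \to 0$ (hence $[N] \to (1 - q)^{-1}$) and $q^{k N} \to 0$ for $k \geq 1$, only the $k = 0$ term survives in the limit, and $\beta_n^{(r)}(N) \to \beta_0^{(r)} (1 - q)^{-n}$, a finite value. For $r \geq 1$ the prefactor $q^{r N}$ then annihilates this limit, so $q^{r N} \beta_n^{(r)}(N) \to 0$; letting $N \to \infty$ in the rewritten identity and using $-(q - 1) r = r (1 - q)$ yields the second case of the theorem.

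The case $r = 0$ must be treated separately because $q^{r N} = 1$ does not vanish, and moreover the series $\sum q^{r k} {[k]}^n = \sum {[k]}^n$ would diverge. Here I would use Corollary \ref{c2} instead, which states $\eta_n = \eta_n(N) - n \sum_{k = 0}^{N - 1} q^k {[k]}^{n - 1}$. The same limiting argument applied to \eqref{eq17} gives $\lim_{N \to \infty} \eta_n(N) = \eta_0 (1 - q)^{-n} = (1 - q)^{-n}$, and combining with $\beta_n^{(0)} = \eta_n$ from \eqref{eq23} delivers the first case. The one conceptual point worth highlighting -- really the only non-mechanical step -- is precisely this dichotomy: for $r \geq 1$ the boundary term is killed by $q^{r N}$, whereas for $r = 0$ it survives and contributes the explicit constant $(1 - q)^{-n}$.
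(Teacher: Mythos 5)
Your proof is correct and follows essentially the same route as the paper: let $N \to \infty$ in Theorem \ref{t5}, check convergence of the series, and evaluate the boundary term $q^{rN}\beta_n^{(r)}(N)$, which vanishes for $r \geq 1$ and contributes $(1-q)^{-n}$ for $r = 0$. The only cosmetic differences are that you compute $\lim_N \beta_n^{(r)}(N)$ from the expansion \eqref{eq28} where the paper uses \eqref{eq26}, and that your detour through Corollary \ref{c2} for $r = 0$ is not strictly needed, since in Theorem \ref{t5} the potentially divergent series $\sum q^{rk}[k]^n$ is multiplied by $r = 0$ and so never actually appears.
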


\begin{proof}
The result follows by letting $N \to + \infty$ in the formula of Theorem \ref{t5}. However, before doing so, we must evaluate $\lim_{N \rightarrow + \infty} q^{r N} \beta_n^{(r)}(N)$ (for given $r \in \N_0$, $n \in \N$). To achieve this, we use Formula \eqref{eq26} of Proposition \ref{p2}. Let $r \in \N_0$ and $n , N \in \N$ be fixed. According to Formula \eqref{eq26} of Proposition \ref{p2}, we have that:
\begin{align*}
\beta_n^{(r)}(N) & = (q - 1)^{- n} \sum_{k = 0}^{n} (-1)^{n - k} \binom{n}{k} \frac{k + r}{[k + r]} q^{k N} \\
& = (q - 1)^{- n} \left[(-1)^n \frac{r}{[r]} + \sum_{k = 1}^{n} (-1)^{n - k} \binom{n}{k} \frac{k + r}{[k + r]} q^{k N}\right] .
\end{align*}
Since $\vert{q}\vert < 1$ by hypothesis, we derive that:
$$
\lim_{N \rightarrow + \infty} \beta_n^{(r)}(N) = (1 - q)^{- n} \frac{r}{[r]} .
$$
Thus
$$
\lim_{N \rightarrow + \infty} q^{r N} \beta_n^{(r)}(N) = \begin{cases}
\lim_{N \rightarrow + \infty} \beta_n^{(r)}(N) = (1 - q)^{- n} & \text{if } r = 0 \\
0 & \text{else}
\end{cases} .
$$
Finally, by letting $N \rightarrow + \infty$ in the formula of Theorem \ref{t5} and incorporating the result we just obtained, we get
$$
n \sum_{k = 0}^{+ \infty} q^{(r + 1) k} {[k]}^{n - 1} + (q - 1) r \sum_{k = 0}^{+ \infty} q^{r k} {[k]}^n = \begin{cases}
(1 - q)^{- n} - \beta_n^{(r)} & \text{if } r = 0 \\
- \beta_n^{(r)} & \text{else} 
\end{cases} ,
$$
implying the desired result of the theorem and completes the proof.
\end{proof}

\begin{rmk}\label{rmk2}
For $r \in \N$, the formula of Theorem \ref{t6} can be written more simply as:
\begin{equation}\label{eq31}
\beta_n^{(r)} = \sum_{k = 0}^{+ \infty} \left(r q^{r k} - (n + r) q^{(r + 1) k}\right) {[k]}^{n - 1} .
\end{equation}
Taking in particular $r = 1$, we derive the following important representation of the Carlitz $q$-Bernoulli numbers as series:
\begin{equation}\label{eq32}
\beta_n = \sum_{k = 0}^{+ \infty} \left(q^k - (n + 1) q^{2 k}\right) {[k]}^{n - 1}
\end{equation}
(valid for $\vert{q}\vert < 1$ and $n \in \N$).
\end{rmk}

From Remark \ref{rmk2} and the fact that the extended Carlitz $q$-Bernoulli numbers of any positive order are $q$-analogues of the classical Bernoulli numbers, we immediately derive the following corollary:

\begin{coll}\label{c4}
For all $r , n \in \N$, we have that
$$
B_n = \lim_{q \stackrel{<}{\to 1}} \sum_{k = 0}^{+ \infty} \left(r q^{r k} - (n + r) q^{(r + 1) k}\right) {[k]}^{n - 1} .
$$
\end{coll}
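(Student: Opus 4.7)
The plan is to derive the corollary as essentially a one-line consequence of two facts that have already been established earlier in the paper. Namely, Remark \ref{rmk2} (Formula \eqref{eq31}) gives, for any $r \in \N$, $n \in \N$, and $|q| < 1$, the identity
$$
\beta_n^{(r)} = \sum_{k = 0}^{+ \infty} \left(r q^{r k} - (n + r) q^{(r + 1) k}\right) {[k]}^{n - 1} ,
$$
while the discussion following Definition \ref{defi1} shows by induction on $r$ that $\beta_n^{(r)}(X)\vert_{q = 1} = B_n(X)$, hence $\beta_n^{(r)}\vert_{q = 1} = B_n$. Putting these together, the corollary will follow by letting $q \to 1^-$ in the series formula, provided we can interchange this limit with the value obtained by continuity of $\beta_n^{(r)}$ at $q = 1$.

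Concretely, I would first fix $r, n \in \N$ and apply Formula \eqref{eq31} to write
$$
\sum_{k = 0}^{+ \infty} \left(r q^{r k} - (n + r) q^{(r + 1) k}\right) {[k]}^{n - 1} = \beta_n^{(r)}
$$
for every $q \in (0, 1)$. Next, I would invoke the fact that $\beta_n^{(r)}$ (regarded as a function of $q$) extends continuously to $q = 1$ with value $B_n$; this extension is valid because the recursion \eqref{eq24} together with Carlitz's result $\beta_n^{(1)}\vert_{q = 1} = B_n$ expresses $\beta_n^{(r)}$, for each fixed $r \in \N$, as a polynomial in $q - 1$ and in the $\beta_k^{(1)}$'s, all of which are well-defined at $q = 1$. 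Taking the left-hand limit $q \to 1^-$ on both sides then yields the desired identity.

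The main (and essentially only) obstacle is the justification that $\lim_{q \to 1^-} \beta_n^{(r)} = B_n$, but this is already implicit in the text preceding Proposition \ref{p2}, where it is shown that the $q$-polynomials $\beta_n^{(r)}(X)$ are well-defined at $q = 1$ and specialize to $B_n(X)$ there. Specializing $X = 0$ gives the scalar continuity needed. No interchange of limit and infinite sum has to be handled directly, because the series has already been summed in closed form to $\beta_n^{(r)}$ on the open interval $(0, 1)$; the limit is then simply a limit of a rational (in fact continuous) function of $q$ at the endpoint.
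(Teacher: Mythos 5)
Your proposal is correct and follows essentially the same route as the paper, which derives the corollary immediately from Remark \ref{rmk2} (Formula \eqref{eq31}) together with the fact that $\beta_n^{(r)}\vert_{q=1}=B_n$. Your extra justification of the continuity of $\beta_n^{(r)}$ at $q=1$ via the recursion \eqref{eq24} is a welcome (if slightly more explicit) elaboration of what the paper leaves implicit.
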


\begin{rmk}\label{rmk3}
If, in the formula of Corollary \ref{c4}, we allow the limit to be taken inside the sum and also permit the manipulation of divergent series, we obtain, for all $n \in \N$:
\[
B_n = \sum_{k = 0}^{+\infty} -n k^{n-1} = -n \sum_{k = 1}^{+\infty} k^{n-1} = -n \zeta(1-n).
\]
This provides a well-known and valid formula for the classical Bernoulli numbers, where $\zeta$ denotes the Riemann zeta function. 
\end{rmk}

\subsubsection{Expression of the $\beta_n^{(r)}$'s in terms of $q$-Stirling numbers of the second kind}

The following theorem provides a generalization of Formula \eqref{eq8} of Carlitz, which express the Carlitz $q$-Bernoulli numbers in terms of $q$-Stirling numbers of the second kind.

\begin{thm}\label{t7}
For all $r \in \N$ and $n \in \N_0$, we have
$$
\beta_n^{(r)} = \sum_{k = 0}^{n} \varphi_r(q , k) S_q(n , k) ,
$$
where
\begin{align*}
\varphi_r(q , k) & = (-1)^k [k]! \sum_{i = 0}^{r - 1} q^{i k} \frac{{[r - 1]}_i}{{[k + r]}_{i + 1}} \\
& = (-1)^k [k]! [r - 1]! q^{- r + 1} \sum_{i = 0}^{r - 1} \frac{(q - 1)^i q^{\frac{1}{2} i (i + 1) - i (r - 1)}}{[r - 1 - i]!} \frac{1}{[k + i + 1]}
\end{align*}
(for all $k \in \N$).
\end{thm}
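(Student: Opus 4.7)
The plan is to introduce a linear functional on $q$-polynomials that captures $\beta_n^{(r)}$, and then run a short induction on $r$. Define the $\R$-linear map $L_r : \E \to \R$ by $L_r(q^{mX}) := \frac{m+r}{[m+r]}$. Formula \eqref{eq26} then reads $\beta_n^{(r)} = L_r([X]^n)$, and applying $L_r$ to the defining expansion \eqref{eq7} yields
\begin{equation*}
\beta_n^{(r)} = \sum_{k=0}^{n} \psi_r(k)\, S_q(n, k), \qquad \text{where } \psi_r(k) := q^{\binom{k}{2}} L_r([X]_k) .
\end{equation*}
Thus the theorem amounts to the identity $\psi_r(k) = \varphi_r(q, k)$.

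Next I would observe the shift relation $L_{r+1}(f) = L_r(q^X f)$ for all $f \in \E$, which is immediate on the basis $\{q^{mX}\}$. Combining this with the identity $q^X [X]_k = q^k [X]_k + (q - 1) q^k [X]_{k+1}$, itself derived from $q^X = 1 + (q-1)[X]$, the decomposition $[X] = q^k [X - k] + [k]$ (which gives $[X]\cdot[X]_k = q^k[X]_{k+1} + [k][X]_k$), and $1 + (q-1)[k] = q^k$, yields the recurrence
\begin{equation*}
\psi_{r+1}(k) = q^k \psi_r(k) + (q - 1) \psi_r(k + 1) .
\end{equation*}
For the base case $r = 1$, Carlitz's formula \eqref{eq8} gives $\psi_1(k) = (-1)^k [k]!/[k+1]$, which agrees with the $r = 1$ value of $\varphi_r(q, k)$ (the sum collapses to its $i = 0$ term).

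It therefore remains to check that the closed form $\varphi_r(q, k)$ of the theorem satisfies the same recurrence $\varphi_{r+1}(q, k) = q^k \varphi_r(q, k) + (q - 1) \varphi_r(q, k+1)$; this is the main obstacle. After dividing out the common factor $(-1)^k [k]!$ and absorbing $[k+1]!/[k]!$ via $(q-1)[k+1] = q^{k+1} - 1$, the recurrence reduces to a purely algebraic identity between finite sums of rational functions in $q^k$ with common denominator $[k+r+1]_{r+1}$; I expect to verify it either by clearing denominators and comparing polynomial numerators, or, more elegantly, by isolating the top-index $i = r$ term on the left and exhibiting a telescoping cancellation in the remaining sums. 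Lastly, the equivalence of the two displayed formulas for $\varphi_r(q, k)$ can be shown by recognising the second as the $q$-partial-fraction expansion of the first in the basis $\{1/[k+i+1]\}_{i=0}^{r-1}$, verified either residue-by-residue at each simple pole $[k+i+1] = 0$ or by checking that both expressions satisfy the same recurrence in $r$.
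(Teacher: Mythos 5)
Your framework is sound and genuinely different from the paper's: introducing the linear functional $L_r(q^{mX}) = \frac{m+r}{[m+r]}$ so that $\beta_n^{(r)} = L_r([X]^n)$ by \eqref{eq26}, applying $L_r$ to \eqref{eq7}, and deriving the recurrence $\psi_{r+1}(k) = q^k\psi_r(k) + (q-1)\psi_r(k+1)$ from the shift relation $L_{r+1}(f)=L_r(q^Xf)$ together with $q^X[X]_k = q^k[X]_k + (q-1)q^k[X]_{k+1}$ --- I checked each of these steps and they are correct. (The paper proceeds quite differently: it evaluates $\sum_{i=0}^{N-1}q^{ri}[i]^n$ in two ways, extends the resulting identity to a $q$-polynomial identity, and reads off the coefficients as $g_r(q,k)=\lim_{X\to 0}[X]^{-1}f_r(q,k,X)$ using the closed form of $\sum_i q^{ir}[i]_k$ from Proposition \ref{p3}.) However, your proof is not complete. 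In your setup the entire content of the theorem has been concentrated into the claim that the explicit expression $\varphi_r(q,k)$ satisfies your recurrence, i.e.\ (after dividing by $(-1)^k[k]!$) the identity
$$
\sum_{i=0}^{r}q^{ik}\frac{{[r]}_i}{{[k+r+1]}_{i+1}} \;=\; q^k\sum_{i=0}^{r-1}q^{ik}\frac{{[r-1]}_i}{{[k+r]}_{i+1}} \;-\; \left(q^{k+1}-1\right)\sum_{i=0}^{r-1}q^{i(k+1)}\frac{{[r-1]}_i}{{[k+r+1]}_{i+1}} ,
$$
and you only announce two candidate strategies for it (clearing denominators, or telescoping) without executing either. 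The identity is true --- it checks out at $r=1$ --- but it is not routine, and deferring the sole nontrivial step leaves a genuine gap. The same applies to the equivalence of the two displayed forms of $\varphi_r(q,k)$, which in the paper takes all of Propositions \ref{p4} and \ref{p5} (a $q$-partial-fraction decomposition followed by an application of the Gauss binomial formula \eqref{eqn1}); ``residue-by-residue'' is a reasonable plan but is likewise left undone.

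Two further remarks. First, your base case imports Carlitz's formula \eqref{eq8}, so the $r=1$ instance of the theorem is assumed rather than proven, whereas the paper's argument re-derives \eqref{eq8} as a byproduct; this is legitimate but worth flagging. Second, the most promising way to close your main gap is probably to prove the recurrence for the first closed form of $\varphi_r(q,k)$ by the same telescoping device the paper uses inside Proposition \ref{p3} (its ``reiterating the formula $r$ times'' is precisely an $r$-recursion for $\sum_i q^{ir}{[i]}_k$, hence for $\psi_r(k)$). With that computation supplied, your argument would be complete and arguably cleaner than the paper's.
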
 

The proof of this theorem needs some preparations which are presented below. We begin with the following proposition:

\begin{prop}\label{p3}
For all $r , N \in \N$ and $k \in \N_0$, we have
$$
\sum_{i = 0}^{N - 1} q^{i r} {[i]}_k = \left(\sum_{\ell = 0}^{r - 1} q^{(r - \ell - 1) N + (\ell + 1) k} \frac{{[r - 1]}_{\ell}}{{[k + r]}_{\ell + 1}}\right) {[N]}_{k + 1} .
$$
\end{prop}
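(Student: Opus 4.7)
My approach is a telescoping argument in $N$. Denote by $A_r(N)$ the parenthesized sum on the right-hand side of the proposition, and set $g(N) := {[N]}_{k+1}\, A_r(N)$. The proposition is equivalent to the equality $g(N) = \sum_{i=0}^{N-1} q^{ir}{[i]}_k$ for all $N \in \N$. Since $g(0) = 0$ (because ${[0]}_{k+1}$ contains the factor $[0] = 0$), it suffices to prove the telescoping identity
\[
g(N+1) - g(N) \;=\; q^{rN}\, {[N]}_k \qquad (\forall N \in \N_0),
\]
after which summation from $N = 0$ up to $N - 1$ yields the claim.

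Using the factorizations ${[N+1]}_{k+1} = [N+1]\, {[N]}_k$ and ${[N]}_{k+1} = [N-k]\, {[N]}_k$, the telescoping identity reduces, as a polynomial identity in $q^N$, to
\[
[N+1]\, A_r(N+1) - [N-k]\, A_r(N) \;=\; q^{rN}.
\]
Grouping the left-hand side by $\ell$, the $\ell$-th term carries the common factor $q^{r-\ell-1}[N+1] - [N-k]$. The crux of the proof is then the algebraic identity
\[
q^{r-\ell-1}[N+1] - [N-k] \;=\; q^{N-k}[k+r-\ell] - [r-\ell-1],
\]
which I would verify directly from the definition $[m] = (q^m-1)/(q-1)$. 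Substituting it splits the sum into two pieces.

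For the first piece, the simplification ${[k+r-\ell]}/{[k+r]}_{\ell+1} = 1/{[k+r]}_\ell$ together with the absorption of the $q^{N-k}$ factor into the preexisting exponents of $q$ produces $\sum_{\ell=0}^{r-1} q^{(r-\ell)N + \ell k}\, {[r-1]}_\ell/{[k+r]}_\ell$. For the second piece, the identity ${[r-1]}_\ell\, [r-\ell-1] = {[r-1]}_{\ell+1}$ followed by the reindexing $\ell \mapsto \ell - 1$ yields the same expression but summed from $\ell = 1$ to $r - 1$ (the boundary term at $\ell = r$ vanishes because ${[r-1]}_r$ contains the factor $[0] = 0$). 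Subtracting, only the $\ell = 0$ term of the first piece survives, and that term collapses to $q^{rN}$, closing the argument.

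The main obstacle is bookkeeping: one must carefully align the shifts in the exponents of $q$ and in the falling $q$-factorials so that the two resulting sums match up and cancel after reindexing. The only genuinely non-mechanical steps are the identification of the telescoping primitive $g(N) = {[N]}_{k+1}\, A_r(N)$ and the splitting identity for $q^{r-\ell-1}[N+1] - [N-k]$; once these are in place, the rest is direct verification.
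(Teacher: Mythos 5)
Your proof is correct. I checked the key points: the splitting identity $q^{r-\ell-1}[N+1]-[N-k]=q^{N-k}[k+r-\ell]-[r-\ell-1]$ holds (both sides equal $\frac{q^{N+r-\ell}-q^{r-\ell-1}-q^{N-k}+1}{q-1}$), the two resulting sums do cancel after the reindexing $\ell\mapsto\ell-1$ because $[r-1]_r=0$, and the surviving $\ell=0$ term is $q^{rN}[r-1]_0/[k+r]_0=q^{rN}$; the reduction from $g(N+1)-g(N)=q^{rN}[N]_k$ to $[N+1]A_r(N+1)-[N-k]A_r(N)=q^{rN}$ is legitimate since you prove the stronger identity and multiply back by $[N]_k$. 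Your route differs from the paper's in organization, though not in its algebraic kernel. The paper computes the single difference $\Delta\bigl(q^{(r-1)X}[X]_{k+1}\bigr)=\bigl(q^{rX-k}[k+r]-q^{(r-1)X}[r-1]\bigr)[X]_k$, sums it, and obtains a recurrence expressing $\sum q^{rX}[X]_k$ in terms of $\sum q^{(r-1)X}[X]_k$, which it then iterates $r$ times (the iteration terminating because the final coefficient contains $[r-1]_r=0$); this is a derivation that produces the closed form. You instead take the closed form as given and verify it by showing its forward difference in $N$ equals the summand, which compresses all $r$ iteration steps into the single cancellation between your two pieces. The paper's version explains where the formula comes from; yours is shorter and self-contained as a verification, at the cost of having to guess the primitive $g(N)=[N]_{k+1}A_r(N)$ in advance. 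Note that your splitting identity is exactly the paper's one-step identity $q^{(r-1)(X+1)}[X+1]-q^{(r-1)X}[X-k]=q^{rX-k}[k+r]-q^{(r-1)X}[r-1]$ with $r$ replaced by $r-\ell$, so the two proofs are close cousins.
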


\begin{proof}
Let $k \in \N_0$ and $N \in \N$ be fixed. For all $r \in \N$, we have
\begin{align*}
\Delta\left(q^{(r - 1) X} {[X]}_{k + 1}\right) & = q^{(r - 1) (X + 1)} {[X + 1]}_{k + 1} - q^{(r - 1) X} {[X]}_{k + 1} \\
& = q^{(r - 1) (X + 1)} [X + 1] [X] \cdots [X - k + 1] - q^{(r - 1) X} [X] [X - 1] \cdots [X - k] \\
& = \left(q^{(r - 1) (X + 1)} [X + 1] - q^{(r - 1) X} [X - k]\right) {[X]}_k .
\end{align*}
But since
\begin{align*}
q^{(r - 1) (X + 1)} [X + 1] - q^{(r - 1) X} [X - k] & = q^{(r - 1) (X + 1)} \frac{q^{X + 1} - 1}{q - 1} - q^{(r - 1) X} \frac{q^{X - k} - 1}{q - 1} \\
& = \frac{q^{r (X + 1)} - q^{(r - 1) (X + 1)} - q^{r X - k} + q^{(r - 1) X}}{q - 1} \\
& = \frac{q^{r X - k} \left(q^{k + r} - 1\right) - q^{(r - 1) X} \left(q^{r - 1} - 1\right)}{q - 1} \\
& = q^{r X - k} [k + r] - q^{(r - 1) X} [r - 1] ,
\end{align*}
it follows that:
$$
\Delta\left(q^{(r - 1) X} {[X]}_{k + 1}\right) = \left(q^{r X - k} [k + r] - q^{(r - 1) X} [r - 1]\right) {[X]}_k .
$$
By multiplying both sides of this last equality by $\frac{q^k}{[k + r]}$ and then summing both sides of the resulting equality from $X = 0$ to $N - 1$, we get (after simplifying and rearranging)
$$
\sum_{X = 0}^{N - 1} q^{r X} {[X]}_k = \frac{q^{(r - 1) N + k}}{[k + r]} {[N]}_{k + 1} + q^k \frac{[r - 1]}{[k + r]} \sum_{X = 0}^{N - 1} q^{(r - 1) X} {[X]}_k .
$$
By reiterating this last formula $r$ times, we get
\begin{align*}
\sum_{X = 0}^{N - 1} q^{r X} {[X]}_k & = q^{(r - 1) N + k} \frac{1}{[k + r]} {[N]}_{k + 1} + q^{(r - 2) N + 2 k} \frac{[r - 1]}{[k + r] [k + r - 1]} {[N]}_{k + 1} \\
& \hspace*{0.5cm} + q^{(r - 3) N + 3 k} \frac{{[r - 1]}_2}{{[k + r]}_3} {[N]}_{k + 1} + \dots + q^{r k} \frac{{[r - 1]}_{r - 1}}{{[k + r]}_r} {[N]}_{k + 1} + q^{r k} \frac{{[r - 1]}_r}{{[k + r]}_r} \sum_{X = 0}^{N - 1} {[X]}_k \\
& = \left(\sum_{\ell = 0}^{r - 1} q^{(r - \ell - 1) N + (\ell + 1) k} \frac{{[r - 1]}_{\ell}}{{[k + r]}_{\ell + 1}}\right) {[N]}_{k + 1} ~~~~~~~~~~ (\text{since } {[r - 1]}_r = 0) .  
\end{align*}
The proposition is proved.
\end{proof}

In what follows, for $r , N \in \N$ and $k \in \N_0$, we set
$$
f_r(q , k , N) := q^{\frac{k (k - 1)}{2}} \sum_{i = 0}^{N - 1} q^{i r} {[i]}_k ,
$$
which is $q$-polynomial in $N$ (according to Proposition \ref{p3}). We let also $f_r(q , k , X)$ denote the unique $q$-polynomial interpolating $f_r(q , k , N)$ at the positive integers $X = N$, and $g_r(q , k)$ denote the limit:
$$
g_r(q , k) := \lim_{X \to 0} \frac{1}{[X]} f_r(q , k , X) .
$$

A first useful expression for $g_r(q , k)$ is directly derived from Proposition \ref{p3}. We have the following corollary:

\begin{coll}\label{c5}
For all $r \in \N$ and $k \in \N_0$, we have
$$
g_r(q , k) = (-1)^k [k]! \left(\sum_{i = 0}^{r - 1} q^{i k} \frac{{[r - 1]}_i}{{[k + r]}_{i + 1}}\right) .
$$
\end{coll}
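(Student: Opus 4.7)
The plan is to start directly from Proposition \ref{p3} and compute the limit by carrying out the routine simplifications. Multiplying the formula of Proposition \ref{p3} by $q^{k(k-1)/2}$ gives, for every positive integer $N$,
$$
f_r(q , k , N) = q^{\frac{k(k-1)}{2}} \left(\sum_{\ell = 0}^{r - 1} q^{(r - \ell - 1) N + (\ell + 1) k} \frac{{[r - 1]}_{\ell}}{{[k + r]}_{\ell + 1}}\right) {[N]}_{k + 1} .
$$
Since each $q^{(r - \ell - 1) N}$ (with $r - \ell - 1 \geq 0$) and ${[N]}_{k + 1}$ are $q$-polynomials in $N$, the right-hand side is already a $q$-polynomial in $N$, and by uniqueness it equals $f_r(q , k , X)$ upon replacing $N$ by $X$.

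Next, I would use the factorization ${[X]}_{k + 1} = [X] \cdot [X - 1] [X - 2] \cdots [X - k]$ to cancel the $[X]$ in the denominator of $\frac{1}{[X]} f_r(q , k , X)$, obtaining
$$
\frac{1}{[X]} f_r(q , k , X) = q^{\frac{k(k-1)}{2}} \left(\sum_{\ell = 0}^{r - 1} q^{(r - \ell - 1) X + (\ell + 1) k} \frac{{[r - 1]}_{\ell}}{{[k + r]}_{\ell + 1}}\right) [X - 1] [X - 2] \cdots [X - k] .
$$
Now the right-hand side is a $q$-polynomial (no more $\frac{1}{[X]}$), so the limit as $X \to 0$ is just the evaluation at $X = 0$. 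The exponential factors satisfy $q^{(r - \ell - 1) X}\big|_{X = 0} = 1$, and I would evaluate the falling-factorial tail using the elementary identity $[-j] = \frac{q^{-j} - 1}{q - 1} = - q^{-j} [j]$, giving
$$
[-1] [-2] \cdots [-k] = (-1)^k q^{- (1 + 2 + \cdots + k)} [k]! = (-1)^k q^{- \frac{k(k+1)}{2}} [k]! .
$$

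Substituting this in and combining the prefactors $q^{\frac{k(k-1)}{2}} \cdot q^{- \frac{k(k+1)}{2}} = q^{-k}$, which absorbs cleanly into $q^{- k} \cdot q^{(\ell + 1) k} = q^{\ell k}$ inside the sum, gives
$$
g_r(q , k) = (-1)^k [k]! \sum_{\ell = 0}^{r - 1} q^{\ell k} \frac{{[r - 1]}_{\ell}}{{[k + r]}_{\ell + 1}} ,
$$
which is exactly the claimed formula. There is no real obstacle here; the only point requiring a small argument is the justification that the expression for $f_r(q , k , X)$ obtained from Proposition \ref{p3} is genuinely the interpolating $q$-polynomial (handled by the uniqueness of interpolation in $\E$), and the careful tracking of the $q$-power exponents during the simplification.
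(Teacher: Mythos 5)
Your proposal is correct and follows essentially the same route as the paper: substitute $X$ for $N$ in the closed form from Proposition \ref{p3}, cancel the factor $[X]$ against ${[X]}_{k+1}$, evaluate at $X=0$ using ${[-1]}_k = (-1)^k q^{-\frac{k(k+1)}{2}}[k]!$, and absorb the resulting power of $q$ into the summand. The exponent bookkeeping ($q^{\frac{k(k-1)}{2}}\cdot q^{-\frac{k(k+1)}{2}} = q^{-k}$, turning $q^{(\ell+1)k}$ into $q^{\ell k}$) matches the paper's computation exactly.
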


\begin{proof}
Let $r \in \N$ and $k \in \N_0$ be fixed. According to Proposition \ref{p3}, we have for all $N \in \N$:
$$
f_r(q , k , N) = q^{\frac{k (k - 1)}{2}} \left(\sum_{\ell = 0}^{r - 1} q^{(r - \ell - 1) N + (\ell + 1) k} \frac{{[r - 1]}_{\ell}}{{[k + r]}_{\ell + 1}}\right) {[N]}_{k + 1} .
$$
Hence
$$
f_r(q , k , X) = q^{\frac{k (k - 1)}{2}} \left(\sum_{\ell = 0}^{r - 1} q^{(r - \ell - 1) X + (\ell + 1) k} \frac{{[r - 1]}_{\ell}}{{[k + r]}_{\ell + 1}}\right) {[X]}_{k + 1} .
$$
Consequently, we have
\begin{align*}
g_r(q , k) & := \lim_{X \to 0} \frac{1}{[X]} f_r(q , k , X) \\
& = \lim_{X \to 0} q^{\frac{k (k - 1)}{2}} \left(\sum_{\ell = 0}^{r - 1} q^{(r - \ell - 1) X + (\ell + 1) k} \frac{{[r - 1]}_{\ell}}{{[k + r]}_{\ell + 1}}\right) {[X - 1]}_k \\
& = q^{\frac{k (k + 1)}{2}} \left(\sum_{\ell = 0}^{r - 1} q^{\ell k} \frac{{[r - 1]}_{\ell}}{{[k + r]}_{\ell + 1}}\right) {[-1]}_k \\
& = (-1)^k [k]! \sum_{\ell = 0}^{r - 1} q^{\ell k} \frac{{[r - 1]}_{\ell}}{{[k + r]}_{\ell + 1}}
\end{align*}
(since ${[-1]}_k = (-1)^k q^{- \frac{k (k + 1)}{2}} [k]!$). The corollary is proved.
\end{proof}

An equally interesting formula for $g_r(q , k)$ ($r \in \N , k \in \N_0$) can be derived by decomposing the $q$-rational function $\frac{q^{i X}}{{[X]}_{i + 1}}$ ($0 \leq i \leq r - 1$) into $q$-partial fractions. This decomposition is given by the following proposition:

\begin{prop}\label{p4}
For all $k \in \N_0$, we have
$$
\frac{q^{k X}}{{[X]}_{k + 1}} = \frac{1}{[k]!} \sum_{i = 0}^{k} \left((-1)^{k - i} \qbinom{k}{i} q^{\frac{k^2 + k}{2} + \frac{i^2 - i}{2}} \frac{1}{[X - i]}\right) .
$$
\end{prop}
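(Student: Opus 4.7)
\noindent\textbf{Proof proposal for Proposition \ref{p4}.} The plan is to treat the identity as a partial fraction decomposition of a rational function in the variable $u := q^X$. The left-hand side, viewed via $[X-j]=\frac{q^{-j}(u-q^j)}{q-1}$, is a proper rational function in $u$ of numerator degree $k$ whose denominator factors as $\prod_{j=0}^{k}(u-q^j)$, so it has exactly $k+1$ simple poles, one at each $u=q^j$, $j=0,1,\dots,k$. Since the factors $[X-i]$ vanish precisely at these poles, the ansatz
$$
\frac{q^{kX}}{[X]_{k+1}}=\sum_{i=0}^{k}\frac{A_i}{[X-i]}
$$
is justified a priori and it remains only to compute the coefficients $A_i$.

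To determine $A_i$, I would multiply both sides by $[X-i]$ and let $X\to i$ (equivalently, $u\to q^i$). On the right this isolates $A_i$; on the left it gives the residue-type expression
$$
A_i \;=\; \frac{q^{k i}}{\prod_{j=0,\,j\neq i}^{k}[i-j]}.
$$
The main computational step is to evaluate the denominator. I would split it according to whether $j<i$ or $j>i$. For $j<i$, the change of index $m=i-j$ gives $\prod_{j=0}^{i-1}[i-j]=[i]!$. For $j>i$, using the identity $[-n]=-q^{-n}[n]$, the change of index $m=j-i$ yields
$$
\prod_{j=i+1}^{k}[i-j]\;=\;\prod_{m=1}^{k-i}\bigl(-q^{-m}[m]\bigr)\;=\;(-1)^{k-i}\,q^{-(k-i)(k-i+1)/2}\,[k-i]!.
$$
Multiplying these two contributions and substituting back gives
$$
A_i\;=\;\frac{(-1)^{k-i}\,q^{k i+(k-i)(k-i+1)/2}}{[i]!\,[k-i]!}.
$$

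The last step is a routine simplification of the exponent: expanding $(k-i)(k-i+1)/2=(k^2+k)/2-ki+(i^2-i)/2$ cancels the $ki$ and produces the claimed exponent $(k^2+k)/2+(i^2-i)/2$. Rewriting $\frac{1}{[i]![k-i]!}=\frac{1}{[k]!}\qbinom{k}{i}$ yields the stated formula. The only real obstacle is the careful bookkeeping of signs and $q$-powers arising from the factors $[i-j]$ with $j>i$; everything else is the standard residue computation applied to simple poles. If desired, one could replace the residue step by clearing denominators and verifying the resulting polynomial identity of degree $k$ in $u$ at the $k+1$ points $u=q^i$, but the two approaches are essentially the same.
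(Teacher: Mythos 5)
Your proposal is correct and follows essentially the same route as the paper: both determine the coefficients by multiplying by $[X-i]$, evaluating at $X=i$, and using $[-n]=-q^{-n}[n]$ to handle the factors with $j>i$, arriving at the same exponent simplification. The only (minor) difference is in justifying the ansatz a priori --- you invoke the classical partial-fraction decomposition of a proper rational function in $u=q^X$ with simple poles at $u=q^j$, whereas the paper argues that the family $\bigl\{\prod_{j\neq i}[X-j]\bigr\}_{0\le i\le k}$ is a basis of $\E_k$; both justifications are valid and equivalent in substance.
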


\begin{proof}
Let $k \in \N_0$ be fixed. We will first establish the existence and uniqueness of real numbers $\alpha_{k , i}(q)$ ($0 \leq i \leq k$) such that the following holds:
\begin{equation}\label{eq33}
\frac{q^{k X}}{{[X]}_{k + 1}} = \sum_{i = 0}^{k} \frac{\alpha_{k , i}(q)}{[X - i]} .
\end{equation}
We have:
\begin{align}
\eqref{eq33} & \Longleftrightarrow q^{k X} = \sum_{i = 0}^{k} \alpha_{k , i}(q) \frac{{[X]}_{k + 1}}{[X - i]} \notag \\
& \Longleftrightarrow q^{k X} = \sum_{i = 0}^{k} \alpha_{k , i}(q) \left(\prod_{\begin{subarray}{c}
0 \leq j \leq k \\
j \neq i
\end{subarray}} [X - j]\right) . \label{eq34}
\end{align}
Consider the family $\mathscr{B}$ of $(k + 1)$ $q$-polynomials of degree $k$ given by:
$$
\mathscr{B} := \left\{\prod_{\begin{subarray}{c}
0 \leq j \leq k \\
j \neq i
\end{subarray}} [X - j]\right\}_{0 \leq i \leq k} .
$$
It can be observed that any $k$ arbitrary $q$-polynomials from $\mathscr{B}$ always share a common root that is not a root of the remaining $q$-polynomial in $\mathscr{B}$. Therefore, no $q$-polynomial in $\mathscr{B}$ can be expressed as a linear combination of the others, which shows that $\mathscr{B}$ is a linearly independent family in $\E_k$. Consequently, since $\card{\mathscr{B}} = \dim{\E_k} = k + 1$, $\mathscr{B}$ is a basis of $\E_k$. It follows that the $q$-polynomial $q^{k X}$ (belonging to $\E_k$) can be uniquely expressed as a linear combination of the $q$-polynomials in $\mathscr{B}$. This establishes the existence and uniqueness of real numbers $\alpha_{k , i}(q)$ ($0 \leq i \leq k$) satisfying \eqref{eq34} (equivalently \eqref{eq33}).

Let us now determine the real numbers $\alpha_{k , i}(q)$ ($0 \leq i \leq k$). For a given $\ell \in \{0 , 1 , \dots , k\}$, by multiplying both sides of \eqref{eq33} by $[X - \ell]$ and then substituting $X = \ell$ into the resulting identity, we obtain:
$$
\frac{q^{k \ell}}{[\ell] [\ell - 1] \cdots [1] \cdot [-1] [-2] \cdots [\ell - k]} = \alpha_{k , \ell}(q) .
$$
But since $[\ell] [\ell - 1] \cdots [1] = [\ell]!$ and $[-1] [-2] \cdots [\ell - k] = (-1)^{k - \ell} q^{- \frac{(k - \ell) (k - \ell + 1)}{2}} [k - \ell]!$ (using the property $[- X] = - q^{- X} [X]$), it follows that:
\begin{align*}
\alpha_{k , \ell}(q) & = \frac{(-1)^{k - \ell} q^{\frac{(k - \ell) (k - \ell + 1)}{2} + k \ell}}{[\ell]! [k - \ell]!} \\
& = \frac{1}{[k]!} (-1)^{k - \ell} \qbinom{k}{\ell} q^{\frac{k^2 + k}{2} + \frac{\ell^2 - \ell}{2}} , 
\end{align*}
as required. This completes the proof.
\end{proof}

Using the formula from Proposition \ref{p4}, the formula of Corollary \ref{c5} for $g_r(q , k)$ can be transformed into another, more interesting form for studying the limiting case as $q \to 1$. We have the following proposition:

\begin{prop}\label{p5}
For all $r \in \N$ and $k \in \N_0$, we have
$$
g_r(q , k) = (-1)^k [k]! [r - 1]! q^{- r + 1} \sum_{i = 0}^{r - 1} \dfrac{(q - 1)^i q^{\frac{1}{2} i (i + 1) - i (r - 1)}}{[r - 1 - i]!} \frac{1}{[k + i + 1]} .
$$
\end{prop}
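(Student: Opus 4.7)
My plan is to substitute the partial-fraction decomposition from Proposition \ref{p4} into the formula of Corollary \ref{c5} and evaluate the resulting inner sum by Gauss's binomial formula \eqref{eqn1}. The underlying idea is that after partial-fractioning $\frac{q^{ik}}{[k+r]_{i+1}}$ at each value of $i$ and collecting like denominators, the coefficient of $\frac{1}{[k+i+1]}$ turns out to be a terminating $q$-binomial sum that can be evaluated in closed form.

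First, I would set $Y := k+r$ so that $\frac{q^{ik}}{[k+r]_{i+1}} = q^{-ir}\frac{q^{iY}}{[Y]_{i+1}}$ and apply Proposition \ref{p4} (with $(k,X)$ replaced by $(i,Y)$) to express each summand of the Corollary \ref{c5} formula as a linear combination of the fractions $\frac{1}{[k+r-\ell]}$ for $0\le\ell\le i$. Swapping the order of summation, performing the substitution $j = i-\ell$, and using the identity $\frac{[r-1]_i}{[i]!}\qbinom{i}{\ell} = \frac{[r-1]!}{[\ell]!\,[i-\ell]!\,[r-1-i]!}$, the coefficient of $\frac{1}{[k+r-\ell]}$ becomes an explicit $\ell$-dependent prefactor multiplied by
$$
\sum_{j=0}^{s} q^{j(j-1)/2}\qbinom{s}{j}\bigl(-q^{\ell-r+1}\bigr)^{j}, \qquad s := r-1-\ell.
$$
By Gauss's formula \eqref{eqn1} (with $x=1$, $y=-q^{\ell-r+1}$) this sum equals $\prod_{u=1}^{s}(1-q^{-u})$, and using $1-q^{-u}=(q-1)[u]q^{-u}$ the product collapses to $(q-1)^{s}\,q^{-s(s+1)/2}\,[s]!$.

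Finally, the factor $[s]!$ cancels a denominator and, after the reindexing $i := r-1-\ell$ so that $\frac{1}{[k+r-\ell]}=\frac{1}{[k+i+1]}$, the remaining prefactors combine into the form claimed by the proposition. The main obstacle will be the careful accounting of the accumulated $q$-exponents: the contributions from $q^{-ir}$, from the partial-fraction exponent $\frac{i^2+i}{2}+\frac{\ell^2-\ell}{2}$ of Proposition \ref{p4}, and from the $-\frac{s(s+1)}{2}$ produced by the product evaluation must together collapse to $\frac{i(i+1)}{2}-i(r-1)-(r-1)$, thereby yielding the factor $q^{-r+1}$ pulled out front and the summand exponent $\frac{1}{2}i(i+1)-i(r-1)$ as in the stated formula.
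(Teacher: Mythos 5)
Your proposal follows essentially the same route as the paper's proof: substitute the partial-fraction decomposition of Proposition \ref{p4} (after writing $q^{ik}=q^{-ir}q^{i(k+r)}$) into Corollary \ref{c5}, swap the two summations, evaluate the resulting inner sum via Gauss's formula \eqref{eqn1} as $\prod_{u=1}^{s}(1-q^{-u})=(q-1)^{s}q^{-s(s+1)/2}[s]!$, and reindex; your exponent bookkeeping, including the final collapse to $\frac{1}{2}i(i+1)-(i+1)(r-1)$, checks out. The argument is correct and matches the paper up to a relabeling of the summation indices.
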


\begin{proof}
Let $r \in \N$ and $k \in \N_0$ be fixed. By successively applying Corollary \ref{c5} and Proposition \ref{p4}, we obtain
\begin{align*}
g_r(q , k) & = (-1)^k [k]! \sum_{i = 0}^{r - 1} q^{- i r} {[r - 1]}_i \left\{\dfrac{q^{i (k + r)}}{{[k + r]}_{i + 1}}\right\} \\
& = (-1)^k [k]! \sum_{i = 0}^{r - 1} q^{- i r} {[r - 1]}_i \frac{1}{[i]!} \sum_{j = 0}^{i} (-1)^{i - j} \qbinom{i}{j} q^{\frac{i^2 + i}{2} + \frac{j^2 - j}{2}} \frac{1}{[k + r - j]} \\
& = (-1)^k [k]! \sum_{\begin{subarray}{l}
0 \leq i \leq r - 1 \\
0 \leq j \leq i
\end{subarray}} q^{- i r} \frac{{[r - 1]}_i}{[i]!} \qbinom{i}{j} (-1)^{i - j} q^{\frac{i^2 + i}{2} + \frac{j^2 - j}{2}} \frac{1}{[k + r - j]} .
\end{align*}
By inverting the summations over $i$ and $j$ and noting that $\frac{{[r - 1]}_i}{[i]!} \qbinom{i}{j} = \qbinom{r - 1}{j} \qbinom{r - 1 - j}{i - j}$ and that $q^{- i r} \cdot q^{\frac{i^2 + i}{2} + \frac{j^2 - j}{2}} = q^{\frac{(i - j)^2 +(i - j)}{2} - i (r - j)}$ (for all $0 \leq i \leq r - 1$ and $0 \leq j \leq i$), we derive that:
$$
g_r(q , k) = (-1)^k [k]! \sum_{0 \leq j \leq r - 1} \left(\sum_{i = j}^{r - 1} \qbinom{r - 1 - j}{i - j} (-1)^{i - j} q^{\frac{(i - j)^2 + (i - j)}{2} - i (r - j)}\right) \qbinom{r - 1}{j} \frac{1}{[k + r - j]} .
$$
Then, performing the change of index $\ell = i - j$ in the inner summation gives
$$
g_r(q , k) = (-1)^k [k]! \sum_{0 \leq j \leq r - 1} \left(\sum_{\ell = 0}^{r - 1 - j} q^{\frac{\ell (\ell - 1)}{2}} \qbinom{r - 1 - j}{\ell} \left(- q^{1 - r + j}\right)^{\ell}\right) q^{j^2 - j r} \qbinom{r - 1}{j} \frac{1}{[k + r - j]} .
$$
But according to the Gauss binomial formula \eqref{eqn1}, we have for all $j \in \{0 , 1 , \dots , r - 1\}$:
\begin{align*}
\sum_{\ell = 0}^{r - 1 - j} q^{\frac{\ell (\ell - 1)}{2}} \qbinom{r - 1 - j}{\ell} \left(- q^{1 - r + j}\right)^{\ell} & = \left(1 - q^{1 - r + j}\right) \left(1 - q^{2 - r + j}\right) \cdots \left(1 - q^{-1}\right) \\
& = q^{1 - r + j} \left(q^{r - 1 - j} - 1\right) \cdot q^{2 - r + j} \left(q^{r - 2 - j} - 1\right) \cdots q^{-1} (q - 1) \\
& = q^{- \frac{(r - 1 - j) (r - j)}{2}} (q - 1)^{r - 1 - j} [r - 1 - j]! .
\end{align*}
Substituting this into the last obtained expression for $g_r(q, k)$, we get
$$
g_r(q , k) = (-1)^k [k]! \sum_{j = 0}^{r - 1} q^{- \frac{(r - 1 - j) (r - j)}{2}} (q - 1)^{r - 1 - j} [r - 1 - j]! q^{j^2 - j r} \qbinom{r - 1}{j} \frac{1}{[k + r - j]} .
$$
Finally, changing the index $i = r - 1 - j$ and rearranging yields
$$
g_r(q , k) = (-1)^k [k]! [r - 1]! q^{- r + 1} \sum_{i = 0}^{r - 1} \dfrac{(q - 1)^i q^{\frac{1}{2} i (i + 1) - i (r - 1)}}{[r - 1 - i]!} \frac{1}{[k + i + 1]} ,
$$
as required.
\end{proof}

We are now ready to prove Theorem \ref{t7}.

\begin{proof}[Proof of Theorem \ref{t7}]
Let $r \in \N$ and $n \in \N_0$ be fixed. In view of Corollary \ref{c5} and Proposition \ref{p5}, we need to show that:
$$
\beta_n^{(r)} = \sum_{k = 0}^{n} g_r(q , k) S_q(n , k) .
$$
To achieve this, consider $N \in \N$ and evaluate the sum
$$
\sum_{i = 0}^{N - 1} q^{r i} [i]^n
$$
in two different ways. On the one hand, from Theorem \ref{t2}, we have
$$
\sum_{i = 0}^{N - 1} q^{r i} {[i]}^n = q^{r N} S_n(N) - S_n(0) ,
$$
where ${(S_k(X))}_{k \in \N_0}$ is the Carlitz-type $q$-polynomial sequence associated to the real sequence of general term $a_k := \frac{1}{q - 1} \frac{1}{[k + r]}$ ($\forall k \in \N_0$). So, putting $s_k := S_k(0)$ ($\forall k \in \N_0$) and using the formula of Theorem \ref{t1} for $S_k(X)$, we derive that:
\begin{align}
\sum_{i = 0}^{N - 1} q^{r i} {[i]}^n & = q^{r N} \sum_{k = 0}^{n} \binom{n}{k} s_k q^{k N} {[N]}^{n - k} - s_n \notag \\
& = \sum_{k = 0}^{n - 1} \binom{n}{k} s_k q^{(k + r) N} {[N]}^{n - k} + \left(q^{(n + r) N} - 1\right) s_n . \label{eq35}
\end{align}
On the other hand, by using Formula \eqref{eq7}, we have that:
\begin{align}
\sum_{i = 0}^{N - 1} q^{r i} {[i]}^n & = \sum_{i = 0}^{N - 1} q^{r i} \left(\sum_{k = 0}^{n} q^{\frac{1}{2} k (k - 1)} S_q(n , k) {[i]}_k\right) \notag \\
& = \sum_{k = 0}^{n} S_q(n , k) \left(q^{\frac{1}{2} k (k - 1)} \sum_{i = 0}^{N - 1} q^{r i} {[i]}_k\right) \notag \\
& = \sum_{k = 0}^{n} S_q(n , k) f_r(q , k , N) . \label{eq36}
\end{align}
Comparing \eqref{eq35} and \eqref{eq36}, we derive the identity:
$$
\sum_{k = 0}^{n - 1} \binom{n}{k} s_k q^{(k + r) N} {[N]}^{n - k} + \left(q^{(n + r) N} - 1\right) s_n = \sum_{k = 0}^{n} S_q(n , k) f_r(q , k , N) .
$$
Since this identity consists of $q$-polynomials in $N$ and it is valid for all $N \in \N$ then
it extends as a $q$-polynomial identity. Namely, we have
$$
\sum_{k = 0}^{n - 1} \binom{n}{k} s_k q^{(k + r) X} {[X]}^{n - k} + \left(q^{(n + r) X} - 1\right) s_n = \sum_{k = 0}^{n} S_q(n , k) f_r(q , k , X) .
$$
Dividing through by $[X]$ and letting $X \to 0$, we derive
\begin{equation}\label{eq37}
n s_{n - 1} + (q - 1) (n + r) s_n = \sum_{k = 0}^{n} g_r(q , k) S_q(n , k) .
\end{equation}
Now, setting for all $k \in \N_0$:
\begin{align*}
T_k(X) & := k S_{k - 1}(X) + (q - 1) (k + r) S_k(X) \\
& = (q - 1) r S_k(X) + k \left(S_{k - 1}(X) + (q - 1) S_k(X)\right) ,
\end{align*}
it is clear that $n s_{n - 1} + (q - 1) (n + r) s_n = T_n(0)$. Next, the $q$-polynomial sequence \linebreak ${\left((q - 1) r S_k(X)\right)}_{k \in \N_0}$ is clearly Carlitz-type with associated real sequence ${\left((q - 1) r a_k\right)}_k = {\left(\frac{r}{[k + r]}\right)}_k$. Besides, according to Item 4 of Proposition \ref{p1}, the $q$-polynomial sequence \linebreak ${\big(k \left(S_{k - 1}(X) + (q - 1) S_k(X)\right)\big)}_{k \in \N_0}$ is also Carlitz-type with associated real sequence \linebreak ${\left((q - 1) k a_k\right)}_k = {\left(\frac{k}{[k + r]}\right)}_k$. Consequently, the $q$-polynomial sequence ${\left(T_k(X)\right)}_{k \in \N_0}$ is Carlitz-type with associated real sequence ${\left(\frac{r}{[k + r]}\right)}_k + {\left(\frac{k}{[k + r]}\right)}_k = {\left(\frac{k + r}{[k + r]}\right)}_k$. Thus ${\left(T_k(X)\right)}_k$ coincides with ${\left(\beta_k^{(r)}(X)\right)}_k$, that is $T_k(X) = \beta_k^{(r)}(X)$ ($\forall k \in \N_0$). In particular, we have $n s_{n - 1} + (q - 1) (n + r) s_n = T_n(0) = \beta_n^{(r)}(0) = \beta_n^{(r)}$. Substituting this into \eqref{eq37} yields the required formula. This completes the proof. 
\end{proof}

From Theorem \ref{t7}, we derive the following curious corollary:

\begin{coll}\label{c6}
For all $n \in \N_0$, we have
$$
\beta_n + q \beta_{n + 1} = \sum_{k = 0}^{n} (-1)^k \frac{[k]!}{[k + 2]} S_q(n , k) .
$$
\end{coll}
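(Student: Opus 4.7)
The plan is to apply Theorem \ref{t7} with $r=2$, using the second (more explicit) form of $\varphi_r(q,k)$ which, when $r=2$, consists of only two summands and conveniently contains Carlitz's weight $\frac{1}{[k+1]}$ as one of them. This exposes the RHS of the corollary after a single algebraic manipulation.

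First I would specialize the second formula for $\varphi_r(q,k)$ in Theorem \ref{t7} to $r=2$. The inner sum has only the indices $i=0$ and $i=1$, and after substitution the factor $[r-1]!q^{-r+1}$ becomes $q^{-1}$. Collecting, one obtains
$$
\varphi_2(q,k) \;=\; (-1)^k [k]!\, q^{-1}\!\left(\frac{1}{[k+1]} + \frac{q-1}{[k+2]}\right).
$$
Multiplying through by $q$ splits $q\varphi_2(q,k) = (-1)^k\frac{[k]!}{[k+1]} + (q-1)(-1)^k\frac{[k]!}{[k+2]}$. Substituting into $q\beta_n^{(2)} = \sum_{k=0}^n q\varphi_2(q,k)S_q(n,k)$ and recognising the first piece as Carlitz's formula \eqref{eq8} for $\beta_n$, we reach
$$
q\beta_n^{(2)} \;=\; \beta_n + (q-1)\sum_{k=0}^n (-1)^k\frac{[k]!}{[k+2]}S_q(n,k).
$$

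Next I would replace $\beta_n^{(2)}$ using the definitional recurrence \eqref{eq24} at $r=1$, namely $\beta_n^{(2)} = \beta_n + (q-1)\beta_{n+1}$. Substituting and rearranging collapses the left-hand side to $(q-1)(\beta_n + q\beta_{n+1})$, so that
$$
(q-1)(\beta_n + q\beta_{n+1}) \;=\; (q-1)\sum_{k=0}^n (-1)^k\frac{[k]!}{[k+2]}S_q(n,k).
$$
Dividing by $q-1$ (valid for $q\neq 1$) yields the corollary; since both sides are rational functions of $q$ well defined at $q=1$ (as already observed in Section \ref{subsec2} for $\beta_n$, $\beta_{n+1}$, and for the RHS via $S_q(n,k)$), the identity extends by continuity to all admissible $q$.

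The only real obstacle is spotting the right manipulation: one must be alert enough to notice that $\varphi_2(q,k)$, as given by the \emph{second} formula of Theorem \ref{t7}, carries Carlitz's weight $\frac{(-1)^k [k]!}{[k+1]}$ as a summand after multiplication by $q$. Approaching the corollary via the first formula for $\varphi_2$ (which would require the $q$-Stirling recursion $S_q(n+1,k) = S_q(n,k-1) + [k]S_q(n,k)$ plus the nontrivial identity $[k][k+2] - [k+1]^2 = -q^k$) also works but is considerably more tedious; the route above avoids these altogether.
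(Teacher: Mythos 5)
Your proposal is correct and follows essentially the same route as the paper: both apply Theorem \ref{t7} with $r=2$ using the second (explicit) form of $\varphi_2(q,k)$, combine it with the recurrence $\beta_n^{(2)}=\beta_n+(q-1)\beta_{n+1}$ from \eqref{eq24}, and observe that $q\varphi_2(q,k)-\varphi_1(q,k)=(q-1)(-1)^k\frac{[k]!}{[k+2]}$ (you phrase this via Carlitz's formula \eqref{eq8}, which is the $r=1$ case of Theorem \ref{t7}). The only difference is the order of the algebraic rearrangement, plus your explicit remark about dividing by $q-1$, which the paper leaves implicit.
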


\begin{proof}
Let $n \in \N_0$ be fixed. According to Formula \eqref{eq24}, we have
$$
\beta_n^{(2)} = \beta_n^{(1)} + (q - 1) \beta_{n + 1}^{(1)} = \beta_n + (q - 1) \beta_{n + 1} ,
$$
which gives $\beta_{n + 1} = \frac{\beta_n^{(2)} - \beta_n}{q - 1}$, and then
$$
\beta_n + q \beta_{n + 1} = \beta_n + q \left(\frac{\beta_n^{(2)} - \beta_n}{q - 1}\right) = \frac{q \beta_n^{(2)} - \beta_n^{(1)}}{q - 1} .
$$
It follows by using Theorem \ref{t7} that
\begin{equation}\label{eq38}
\beta_n + q \beta_{n + 1} = \sum_{k = 0}^{n} \dfrac{q \varphi_2(q , k) - \varphi_1(q , k)}{q - 1} S_q(n , k) .
\end{equation}
Further, from the second expression of $\varphi_r(q , k)$ in Theorem \ref{t7}, we have that
$$
\varphi_1(q , k) = (-1)^k \frac{[k]!}{[k + 1]} ~~\text{and}~~ \varphi_2(q , k) = (-1)^k [k]! q^{-1} \left(\frac{1}{[k + 1]} + \frac{q - 1}{[k + 2]}\right) .
$$
Thus
$$
\frac{q \varphi_2(q , k) - \varphi_1(q , k)}{q - 1} = (-1)^k \frac{[k]!}{[k + 2]} .
$$
Substituting this into \eqref{eq38} yields the required formula. This achieves the proof.
\end{proof}

Letting $q \to 1$ in Corollary \ref{c6} gives the following result on the classical Bernoulli numbers, which is very recently pointed out by the author \cite[Page 12, Formula (2.3)]{far}:

\begin{coll}\label{c7}
For all $n \in \N_0$, we have
\begin{equation}
B_n + B_{n + 1} = \sum_{k = 0}^{n} (-1)^k \frac{k!}{k + 2} S(n , k) . \tag*{\qedsymbol}
\end{equation}
\end{coll}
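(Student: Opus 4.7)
My plan is to obtain Corollary \ref{c7} as a direct specialization of Corollary \ref{c6} by letting $q \to 1^-$ (or $q \to 1$ from any direction where everything is defined). Every quantity appearing in Corollary \ref{c6} has a well-understood behavior at $q = 1$, so the argument reduces to justifying termwise passage to the limit in a finite sum.

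First I would recall that, as established in the discussion following Definition \ref{defi1}, the Carlitz $q$-Bernoulli polynomials $\beta_n(X) = \beta_n^{(1)}(X)$ are well-defined at $q = 1$ and satisfy $\beta_n(X)\vert_{q=1} = B_n(X)$; specializing at $X = 0$ gives $\lim_{q \to 1} \beta_n = B_n$ for every $n \in \N_0$. Hence the left-hand side of Corollary \ref{c6} converges:
$$
\lim_{q \to 1} \bigl(\beta_n + q \beta_{n+1}\bigr) = B_n + B_{n+1}.
$$

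Next I would handle the right-hand side. For each fixed $k \in \{0, 1, \dots, n\}$, the $q$-analogues satisfy $[m] \to m$ as $q \to 1$ for every integer $m \geq 0$, so $[k]! \to k!$ and $[k+2] \to k+2$ (which is nonzero). Moreover, as noted immediately after equation \eqref{eq7}, specializing $q = 1$ in $S_q(n,k)$ recovers the classical Stirling number $S(n,k)$. Therefore, for each fixed $k$,
$$
\lim_{q \to 1} (-1)^k \frac{[k]!}{[k+2]} S_q(n,k) = (-1)^k \frac{k!}{k+2} S(n,k).
$$
Since the sum in Corollary \ref{c6} has only finitely many terms (indexed by $0 \leq k \leq n$), the limit commutes with the summation, and the right-hand side tends to $\sum_{k=0}^{n} (-1)^k \frac{k!}{k+2} S(n,k)$.

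Combining these two limits in the identity of Corollary \ref{c6} yields the asserted formula. There is no real obstacle here; the only thing one might want to check, for rigor, is that the rational expressions $[k]!/[k+2]$ and the numbers $S_q(n,k)$ are indeed continuous at $q = 1$ (which is immediate, since both are polynomials or ratios of polynomials in $q$ whose denominators do not vanish at $q=1$), together with the well-definedness of $\beta_n$ at $q = 1$, which has already been invoked by the author in the preceding discussion.
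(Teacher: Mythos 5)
Your proposal is correct and follows exactly the paper's route: the author obtains Corollary \ref{c7} precisely by letting $q \to 1$ in Corollary \ref{c6}, treating the passage to the limit as immediate (the corollary is stated with no further proof). You have simply made explicit the routine justifications — termwise limits in a finite sum, $[m] \to m$, $S_q(n,k) \to S(n,k)$, and the well-definedness of $\beta_n$ at $q=1$ — that the paper leaves implicit.
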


\rhead{\textcolor{OrangeRed3}{\it References}}

\end{document}